\providecommand{\customgenericname}{}
\newcommand{\newcustomtheorem}[2]{%
	\newenvironment{#1}[1]
	{%
		\renewcommand\customgenericname{#2}%
		\renewcommand\theinnercustomgeneric{##1}%
		\innercustomgeneric
	}
	{\endinnercustomgeneric}
}
\theoremstyle{plain}
\newtheorem{theorem}{Theorem}[section]
\newtheorem{lemma}[theorem]{Lemma}
\numberwithin{equation}{section}
\theoremstyle{definition}
\newtheorem{definition}[theorem]{Definition}
\newtheorem{proposition}[theorem]{Proposition}
\newtheorem{remark}[theorem]{Remark}
\theoremstyle{remark}
\newcommand{\RR}{\mathbb{R}}
\newcommand{\NN}{\mathbb{N}}
\newcommand{\SN}{\mathbb{S}}
\begin{document}

\title[Dynamics and integrability of polynomial vector fields on the $n$-sphere]{Dynamics and integrability of polynomial vector fields on the $n$-dimensional sphere}
%\author{Joji Benny and Soumen Sarkar}
%\address{Department of Mathematics, Indian Institute of Technology Madras, India}

\author[S. Jana]{Supriyo Jana}
\address{Department of Mathematics, Indian Institute of Technology Madras, India}
\email{supriyojanawb@gmail.com}

\author[S. Sarkar]{Soumen Sarkar}
\address{Department of Mathematics, Indian Institute of Technology Madras, India}
\email{soumen@iitm.ac.in}

\date{\today}
\subjclass[2020]{34C08, 34A34, 34C14, 34C40, 34C45}
\keywords{Polynomial vector field, Hamiltonian vector field, Invariant hyperplane, Invariant cone on sphere, Complete integrability}
\thanks{}

\abstract 
In this paper, we characterize arbitrary polynomial vector fields on $\SN^n$. We establish a necessary and sufficient condition for a degree one vector field on the odd-dimensional sphere $\SN^{2n-1}$ to be Hamiltonian. Additionally, we classify polynomial vector fields on $\SN^n$ up to degree two that possess an invariant great $(n-1)$-sphere. We present a class of completely integrable vector fields on $\SN^n$. We found a sharp bound for the number of invariant meridian hyperplanes for a polynomial vector field on $\SN^2$. Furthermore, we compute the sharp bound for the number of invariant parallel hyperplanes for any polynomial vector field on $\SN^n$. Finally, we study homogeneous polynomial vector fields on $\SN^n$, providing a characterization of their invariant $(n-1)$-spheres.
\endabstract

\maketitle

\section{Introduction}
Let  $P_1,\ldots,P_d$ be polynomials in $\mathbb{R}[x_1,\ldots,x_d]$. Then, the following system of differential equations
\begin{equation} \label{eq: I1}
 \frac{dx_i}{dt} = P_i(x_1, \ldots,x_d)       \end{equation}
for $i = 1,\ldots, d$ is called a polynomial differential system in $\mathbb{R}^d$. The differential operator
\begin{equation}  \label{eq: I2}
 \chi = \sum_{i=1}^d P_i \frac{\partial}{\partial x_i}    
\end{equation}
is called the vector field associated with the system \eqref{eq: I1}. The number $\max\limits_{1\leq i\leq d} \{ \deg P_i\}$ is called the degree of the polynomial vector field \eqref{eq: I2}. 

The polynomial vector field $\chi$ is called homogeneous if $P_1, \ldots, P_d$ are homogeneous polynomials of the same degree.

We note that the differential system \eqref{eq: I1} with $d=2$ has been studied since 1900. In the same year, Hilbert proposed a problem related to this system which is now known as Hilbert's 16th problem, see for instance \cite{hilbert1900, Ila02}.
 
A subset of $\RR^d$ is called invariant for the vector field \eqref{eq: I2} if the set is a union of some trajectories of the vector field. An algebraic set given by the zero set of some polynomial $f \in \mathbb{R}[x_1,x_2, \ldots,x_d]$ is called an invariant algebraic set if there exists $K\in \RR[x_1,x_2, \ldots,x_d]$ such that $\chi f = Kf$. In such case, $\chi$ is called a polynomial vector field on the hypersurface $\{f=0\}$ and $K$ is called the cofactor of $\chi$ for $\{f=0\}$.

Hilbert's 16th problem \cite{hilbert1900} asks bound for the number of invariant isolated curves in terms of the degree of polynomial vector fields in $\RR^2$. Since then, several researches have been done on this topic, see \cite{Ila02} and the references therein. However, the study of polynomial vector fields in higher dimensions started blooming in the 1970s, see for instance \cite{CL90, Go69, Ho79, Jou79} and several papers by J. Llibre and his collaborators. One natural question can be asked after the above definition. How many invariant algebraic sets are there for a given vector field? Another somewhat related question is whether a polynomial system has a first integral. This question has been studied since the work of Darboux \cite{Dar78} and Poincar\'{e} \cite{H1881}, and still, many interesting questions have remained to be answered. Darboux \cite{Dar78} showed the computation of (rational) first integral when the polynomial vector field in $\RR^2$ possesses plenty of invariant algebraic curves. Jouanolou \cite{Jou79} extended similar results in higher dimensions. Some more developments on this topic can be found in \cite{CL99,llibre2018darboux, LliZh09}. Also, 
generalizing the Darboux theory of integrability from planar vector fields, Llibre and Bola\~{n}os \cite{LliBol11} showed that polynomial vector fields on a regular algebraic hypersurface having a sufficient number of invariant algebraic hypersurfaces possess a rational first integral. Therefore, it is essential to study various invariant algebraic hypersurfaces for a given vector field.

We recall that Camacho  \cite{Cam} studied some properties of Morse-Smale vector fields on $\SN^2$  of degree two.
%a class of degree two homogeneous polynomial vector fields on $\SN^2$ has been studied in \cite{Cam}. 
The invariant circles for quadratic polynomial vector fields on $\mathbb{S}^2$ are discussed in \cite{llibre2006homogeneous} and \cite{LliZho11}. Also, the homogeneous polynomial vector fields on $\SN^2$ and their number of invariant (great) circles have been explored in \cite{LliPesII06}. 
The Darboux theory of integrability and the maximum number of invariant `parallels' and `meridians' for polynomial vector fields on $\SN^n$ have been studied in \cite{llibre2018darboux}. The authors and Benny \cite{BJS_24} gave a characterization of cubic polynomial vector fields on $\SN^2$ and studied a few properties of these vector fields. 

In this paper, we classify all polynomial vector fields of arbitrary degree on the standard $n$-dimensional sphere
$\SN^n :=\{(x_1,\ldots,x_{n+1})\in \RR^{n+1} ~|~ \sum\limits_{i=1}^{n+1}x_i^2-1=0\}.$
Then, we study their invariant meridian hyperplanes, parallel hyperplanes, and $(n-1)$-spheres. We also determine the integrability of polynomial vector fields on $\SN^n$ and find Hamiltonian polynomial vector fields on $\SN^n$.
We note that the intersection of a hyperplane $L:=\{\sum\limits_{i=1}^{n+1}a_ix_i+b=0\}$ with $\SN^n$ is said to be an $(n-1)$-sphere in $\SN^n$ where $a_i,b\in \RR$ with $a_1^2 + \cdots + a_{n+1}^2=1$ and $|b|<1$. The following definitions can be obtained as particular cases.
\begin{enumerate}
    \item If the hyperplane $L$ passes through the origin (i.e., $b=0$ in $L$), the intersection is called a great $(n-1)$-sphere in $\SN^n$.
    \item If $a_{n+1}=0$ and $b=0$ in $L$, the intersection is called a meridian and $L$ is called a meridian hyperplane.
    \item If $a_1,\ldots,a_n=0$ in $L$, the intersection is called a parallel and $L$ is called a parallel hyperplane.
\end{enumerate}

The paper is organized as follows. Section \ref{sec:basic_on_vf} contains some basic notions required in this paper. First, we recall the definition of the extactic polynomial of a given vector field associated with a vector subspace of the ring of polynomials. We recall definitions of first integral, complete integrability, and Hamiltonian vector field. We discuss the transformation of polynomial vector fields under the stereographic projection.

In Section \ref{sec:vf_on_sn}, we study some properties of the polynomial vector fields on $\SN^n$. 
 \begin{theorem}\label{thm:deg-n-vfld}
    Let $\chi=(P_1,...,P_{n+1})$ be a degree $m$ polynomial vector field in $\RR^{n+1}$. Then $\chi$ is a vector field on $\SN^n$ if and only if
    \begin{equation}\label{eq:deg-n-form}
        \begin{split}
            P_i=\sum\limits_{j=1}^{d(m)}(1-(\sum\limits_{k=1}^{n+1} x_k^2)^j) f_{ij} +\sum\limits_{j=1}^{n+1} A_{ij}x_j~\mbox{with}~d(m)=\begin{cases}
                \frac{m-1}{2}&~\mbox{if}~m~\mbox{is odd}\\
                \frac{m}{2}&~\mbox{if}~m~\mbox{is even}
            \end{cases}
        \end{split}
    \end{equation}
where $f_{ij}$ is a polynomial with degree less than or equals to $(m-2j)$ and
$A_{ij}$ is a polynomial with degree less than or equals to $(m-1)$ such that $A=(A_{ij})$ is a skew-symmetric matrix.
\end{theorem}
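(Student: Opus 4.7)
I first rephrase the tangency condition: $\chi$ is a vector field on $\SN^n$ precisely when $\chi(r^2-1) = K(r^2-1)$ for some polynomial cofactor $K$, i.e., when $(r^2-1)$ divides $\sum_{i} x_i P_i$ in $\RR[x_1,\ldots,x_{n+1}]$, where $r^2 := \sum_{k} x_k^2$. Both directions of the theorem will be proved from this reformulation.

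The ``if'' direction reduces to a direct substitution. Using the telescoping identity
\[
1-r^{2j} = -(r^2-1)\sum_{\ell=0}^{j-1} r^{2\ell},
\]
each summand $(1-r^{2j}) f_{ij}$ contributes a polynomial divisible by $r^2-1$ to $\sum_{i} x_i P_i$, while the skew-symmetry of $A$ kills the remaining contribution since $\sum_{i,j} A_{ij} x_i x_j = 0$. The stated degree bounds on $f_{ij}$ and $A_{ij}$ are precisely what forces $\deg P_i \leq m$.

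For the ``only if'' direction, I would write $2\sum_{i} x_i P_i = K(r^2-1)$ with $\deg K \leq m-1$ and proceed in two steps. Evaluating at $x=0$ gives $K(0)=0$, so $K$ lies in the maximal ideal $(x_1,\ldots,x_{n+1})$ and admits a representation $K = 2\sum_{i} x_i g_i$ with $g_i$ of degree at most $m-2$. Substituting yields
\[
\sum_{i} x_i\bigl(P_i - (r^2-1) g_i\bigr) = 0.
\]
The second step is to invoke Koszul syzygy exactness for the regular sequence $x_1,\ldots,x_{n+1}$: any tuple $(h_1,\ldots,h_{n+1})$ of polynomials with $\sum_{i} x_i h_i=0$ has the form $h_i = \sum_{j} A_{ij} x_j$ for a skew-symmetric matrix $A$ of polynomials, and a grading-by-homogeneous-degree argument shows that $\deg A_{ij} \leq m-1$ in the present setting. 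Applying this to $h_i = P_i - (r^2-1) g_i$ gives
\[
P_i = (1-r^2)(-g_i) + \sum_{j} A_{ij} x_j,
\]
which is a special case of the asserted form after setting $f_{i1}=-g_i$ and $f_{ij}=0$ for $j\geq 2$.

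The principal obstacle is the Koszul step, i.e., proving that syzygies of $(x_1,\ldots,x_{n+1})$ are generated by the ``trivial'' skew relations $x_j e_i - x_i e_j$. I would supply either a short direct proof by induction on the number of variables (separating out monomials containing $x_{n+1}$ and iterating) or cite exactness of the Koszul complex on the regular sequence. A concluding remark is worth making: the decomposition afforded by the theorem is highly non-unique, as $1-r^{2j}$ is a $\RR[x]$-multiple of $1-r^2$ for every $j\geq 1$; the higher $j$ terms in the statement merely provide extra bookkeeping flexibility and do not enlarge the class of vector fields being described.
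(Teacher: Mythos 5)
Your proof is correct, and the ``only if'' direction follows a genuinely more direct route than the paper's. The shared core is the syzygy step: the paper's Lemma \ref{lem:sum-pi-xi-0} (stated there for powers $x_i^k$, applied with $k=1$) is exactly your Koszul claim that every relation $\sum_i x_i h_i=0$ is a combination of the trivial relations $x_je_i-x_ie_j$, giving $h_i=\sum_j A_{ij}x_j$ with $A$ skew-symmetric; your grading observation for $\deg A_{ij}\leq m-1$ is sound, and either of your two suggested justifications (induction on variables, or exactness of the Koszul complex on a regular sequence) closes that step. Where you diverge is in how you reduce to the syzygy: the paper decomposes each $P_i$ and the cofactor $K$ into homogeneous parts, derives the telescoping recursions \eqref{eq:p_2l} and \eqref{eq:p_2l-1} degree by degree, and only then applies the lemma to the top-degree identity; you instead note that $K$ vanishes at the origin, write $K=2\sum_i x_ig_i$ with $\deg g_i\leq m-2$, and land on a single syzygy $\sum_i x_i\bigl(P_i-(r^2-1)g_i\bigr)=0$ in one stroke. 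Your version is shorter and produces the representation with $f_{i1}=-g_i$ and $f_{ij}=0$ for $j\geq 2$, which is a legitimate instance of \eqref{eq:deg-n-form} and in fact coincides with the simplified form \eqref{eq:deg-n-form2} that the paper records in the remark immediately after the theorem; your closing observation that the $j\geq 2$ terms add no generality is the content of that remark. What the paper's longer route buys is the explicit identification $f_{ij}=P_i^{(m-2j)}+P_i^{(m-2j-1)}$ in terms of the homogeneous components of $P_i$, i.e.\ a canonical choice of the $f_{ij}$, which your argument does not supply but which the theorem statement does not require. Your ``if'' direction (telescoping $1-r^{2j}=-(r^2-1)\sum_{\ell=0}^{j-1}r^{2\ell}$ plus skew-symmetry of $A$) matches the paper's one-line verification.
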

\noindent We show that if a linear polynomial is given, then there exist polynomial vector fields on $\SN^n$ having that linear polynomial as a first integral, see \Cref{thm:linear-fi}. Theorem \ref{thm:cmplt-int-vfld} states that 
there exist completely integrable degree $m$ vector fields on $\SN^n$ for each $m,n\in \NN$.
We study polynomial vector fields on $\SN^n$ of degree up to two, which have an invariant great $(n-1)$ sphere in $\SN^n$, see \Cref{thm:inv-grt-sphere-upto-2}. We give a necessary condition for a polynomial vector field on $\SN^{2n-1}$ to be Hamiltonian, see \Cref{thm:hamiltonian-necessary}. We give a necessary and sufficient condition for a degree one vector field on $\SN^{2n-1}$ to be Hamiltonian. 
\begin{theorem}\label{thm:deg-one-hamiltonian}
Suppose $\chi=(P_1,\ldots,P_{2n})$ is a degree one vector field on $\SN^{2n-1}$ such that $$\begin{pmatrix}
    P_1&\dots &P_{2n}
\end{pmatrix}^t=Ax$$ where $A=(a_{ij})_{2n\times 2n}$ is a constant skew-symmetric matrix and $x^t=\begin{pmatrix}
    x_1&\dots&x_{2n}
\end{pmatrix}$.
Then $\chi$ is Hamiltonian vector field if and only if $B=(b_{ij})_{2n\times 2n}$ with $b_{2i-1,j}=a_{2i,j}$ and $b_{2i,j}=-a_{2i-1,j}$ is a symmetric matrix.
\end{theorem}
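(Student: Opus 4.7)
The plan is to unpack the definition of Hamiltonian vector field with respect to the standard symplectic form $\omega=\sum_{i=1}^{n}dx_{2i-1}\wedge dx_{2i}$ on $\RR^{2n}$, and then reinterpret the resulting system as a single linear gradient equation whose solvability is governed exactly by the matrix $B$ in the statement. Recall that $\chi$ is Hamiltonian precisely when there exists a smooth function $H$ on $\RR^{2n}$ with
\begin{equation*}
P_{2i-1}=\frac{\partial H}{\partial x_{2i}},\qquad P_{2i}=-\frac{\partial H}{\partial x_{2i-1}},\qquad i=1,\ldots,n.
\end{equation*}

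Substituting $P_i=(Ax)_i=\sum_j a_{ij}x_j$, these two families of equations read
\begin{equation*}
\frac{\partial H}{\partial x_{2i}}=\sum_j a_{2i-1,j}x_j,\qquad \frac{\partial H}{\partial x_{2i-1}}=-\sum_j a_{2i,j}x_j.
\end{equation*}
By the very definition of $B$, namely $b_{2i-1,j}=a_{2i,j}$ and $b_{2i,j}=-a_{2i-1,j}$, the right-hand sides are precisely $-(Bx)_{2i-1}$ and $-(Bx)_{2i}$, respectively. Hence the Hamiltonian condition is equivalent to the single vector equation $\nabla H=-Bx$, and the theorem reduces to the standard fact that a linear vector field on $\RR^{2n}$ admits a potential function if and only if the associated matrix is symmetric.

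For the forward implication I would use the equality of mixed second-order partial derivatives: if such an $H$ exists, then the Jacobian of $\nabla H$, which equals $-B$, must be symmetric, whence $B=B^{t}$. For the converse, assuming $B$ is symmetric I would exhibit the Hamiltonian explicitly by setting
\begin{equation*}
H(x)=-\tfrac{1}{2}\, x^{t}Bx,
\end{equation*}
and verifying that $\nabla H=-\tfrac{1}{2}(B+B^{t})x=-Bx$, which recovers $\chi$ via the formulas above. Note that tangency of $\chi$ to $\SN^{2n-1}$ requires no separate verification: it is already built into the hypothesis that $A$ is skew-symmetric (so $\langle x,Ax\rangle=0$), and $H$ is a globally defined polynomial on $\RR^{2n}$ whose restriction to $\SN^{2n-1}$ serves as the Hamiltonian.

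The argument involves no deep analytic or geometric obstacle; the only point needing care is bookkeeping of the three overlapping conventions at play simultaneously, namely the pairing $(x_{2i-1},x_{2i})$ used to form $\omega$, the skew-symmetry of $A$, and the row-swap-with-sign-flip defining $B$. The key conceptual observation is simply that $B$ is precisely the matrix engineered so that the Hamiltonian system $(P_{2i-1},P_{2i})=(\partial H/\partial x_{2i},-\partial H/\partial x_{2i-1})$ rewrites as $\nabla H=-Bx$, at which point the symmetry criterion is immediate.
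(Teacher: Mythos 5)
Your proof is correct and takes essentially the same route as the paper's: both reduce the Hamiltonian condition to the gradient equation $\nabla H=\pm Bx$ and invoke the symmetry criterion (equality of mixed partials for the forward direction, the explicit quadratic $H=\mp\tfrac12 x^tBx$ for the converse). The one slip is that you have reversed the sign convention relative to the paper's definition, which is $P_{2i-1}=-\partial H/\partial x_{2i}$ and $P_{2i}=+\partial H/\partial x_{2i-1}$; this only replaces $H$ by $-H$ and leaves the criterion ``$B$ symmetric'' unchanged, so the argument still establishes the stated equivalence.
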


In Section \ref{sec:inv_mer_par}, we discuss on the number of invariant meridian hyperplanes for a polynomial vector field on $\SN^n$, taking into account multiplicities.
\begin{theorem}\label{mer-bound-reach}
Let $m,n >1$. The following statements hold.
\begin{enumerate}
    \item There are vector fields of degree $m$ on $\SN^n$ with at least $\binom{n-1}{2}(m-2)+(n-1)(m-1)$ invariant meridian hyperplanes.
    \item There are vector fields of degree $m$ on $\SN^n$ with at least $(m-1)$ invariant meridian hyperplanes each with multiplicity $(n-1)$. 
    \item A degree $m$ vector field on $\SN^2$ having finitely many invariant meridian hyperplanes can have maximum $m$ invariant meridian hyperplanes. Moreover, this bound can be reached. 
    \item There are vector fields of degree $m$ on $\SN^3$ with $3m-2$ invariant meridian hyperplanes. 
\end{enumerate}
\end{theorem}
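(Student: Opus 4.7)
The theorem has four parts: three existence claims $(1), (2), (4)$ and a sharp-bound-plus-example claim $(3)$. I would treat $(3)$ first as the technical heart, then handle the existence claims by explicit construction.

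For the upper bound in $(3)$, my plan is to apply the extactic-polynomial criterion recalled in Section \ref{sec:basic_on_vf} to the two-dimensional subspace $W = \operatorname{span}(x_1, x_2) \subset \RR[x_1, x_2, x_3]$ of meridian-defining linear forms. Its extactic polynomial is
\[
E_W(\chi) \;=\; \det\begin{pmatrix} x_1 & x_2 \\ P_1 & P_2 \end{pmatrix} \;=\; x_1 P_2 - x_2 P_1.
\]
When $\chi$ has only finitely many invariant meridians, the product of their defining polynomials (with multiplicity) must divide $E_W(\chi)$ modulo $x_1^2 + x_2^2 + x_3^2 - 1$. Using the canonical form from Theorem \ref{thm:deg-n-vfld} and reducing modulo the sphere yields
\[
E_W(\chi) \;\equiv\; -A_{12}(1 - x_3^2) + x_3(A_{23} x_1 - A_{13} x_2) \pmod{x_1^2+x_2^2+x_3^2-1},
\]
where each $A_{ij}$ has degree $\le m - 1$. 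Parametrizing the pencil of meridians by $t \in \RR\mathbb{P}^1$ through $\{x_2 = t x_1\}$ and substituting, the invariance condition reduces to a univariate polynomial equation in $t$ whose degree I would verify to be at most $m$, giving the stated bound. Sharpness I realize by an explicit construction such as $P_1 = -x_2 g$, $P_2 = x_1 g$ with $g = \prod_{k=1}^{m-1}(x_2 - c_k x_1)$, suitably modified by the skew-symmetric term of Theorem \ref{thm:deg-n-vfld} to produce exactly $m$ distinct real meridian factors of $E_W(\chi)$.

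For the existence parts $(1)$, $(2)$, and $(4)$, I would construct explicit vector fields. In $(1)$, I take the first $n$ components $P_i$ as products of linear meridian forms in $x_1, \ldots, x_n$: the $(n-1)$ coordinate meridians $\{x_i = 0\}$ for $i \le n - 1$, each entering $P_i$ with a factor of degree $m-1$, yield the $(n-1)(m-1)$ summand, while the $\binom{n-1}{2}$ cross-meridians $\{x_i - c_{ij} x_j = 0\}$ for $1 \le i < j \le n - 1$, each contributing $m-2$, yield the $\binom{n-1}{2}(m-2)$ summand; the vector field is then completed on $\SN^n$ using the skew-symmetric correction term of Theorem \ref{thm:deg-n-vfld}. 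In $(2)$, the choice $P_i = x_i \prod_{k=1}^{m-1}(a_k \cdot x)$ with meridian linear forms $a_k \cdot x = \sum_{j=1}^{n} a_{kj} x_j$ produces $m-1$ invariant meridians each of multiplicity at least $n-1$, verified by iterated cofactor computations showing $(a_k \cdot x)^{n-1}$ divides the appropriate power of $\chi$ applied to $a_k \cdot x$. In $(4)$, on $\SN^3$ I use three independent families of linear meridian factors (two of size $m-1$ and one of size $m$) to realize $3m - 2$ invariant meridians, checked by direct substitution.

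The main obstacle is the sharp upper bound in $(3)$: the naive extactic-polynomial degree bound is $m+1$, and reducing it to the tight bound $m$ requires carefully tracking how the non-meridian factor $(1 - x_3^2)$ appearing in the reduction of $E_W(\chi)|_{\SN^2}$ absorbs one degree of the count. The $t$-parametrization above is the cleanest way to make this rigorous, since it converts the divisibility question on the quadric into a univariate polynomial statement whose degree can be read off unambiguously.
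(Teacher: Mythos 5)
Your framework (extactic polynomials for the subspace of meridian-defining forms) is the right one, but several of your constructions fail or are left unverified at exactly the points where the work lies. Most concretely: in part (2) the field $P_i = x_i\prod_{k=1}^{m-1}(a_k\cdot x)$ is not a vector field on $\SN^n$ at all, since $\sum_i P_ix_i = \bigl(\sum_i x_i^2\bigr)\prod_k(a_k\cdot x)$ is not divisible by $\sum_i x_i^2-1$. In part (3) your sharpness example $P_1=-x_2g$, $P_2=x_1g$ gives $\mathcal{E}_W(\chi)=(x_1^2+x_2^2)g$, whose only real linear factors are the $m-1$ factors of $g$; since $x_1^2+x_2^2$ is irreducible over $\RR$, this realizes $m-1$ meridians, not $m$, and the unspecified ``suitable modification'' is carrying the whole claim. (The paper instead puts the degree-$(m-1)$ product into $A_{23}$, making the extactic polynomial $A_{23}x_1x_3$ and thereby gaining the extra meridian $x_1=0$.) For the upper bound in (3), your $t$-parametrization ends with ``whose degree I would verify to be at most $m$,'' which is precisely the step that needs an argument; note also that Proposition~\ref{Prop:1} gives divisibility of $\mathcal{E}_W(\chi)$ in the polynomial ring, not modulo the sphere, so working modulo $x_1^2+x_2^2+x_3^2-1$ is an unjustified weakening. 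The paper's route is to write $\mathcal{E}_W(\chi)=(x_1^2+x_2^2)f+x_3g+h$ and observe that $m+1$ meridians would force $g=h=0$, contradicting the $\RR$-irreducibility of $x_1^2+x_2^2$; that irreducibility is the single idea that lowers $m+1$ to $m$, and it is absent from your sketch.

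Parts (1) and (4) are also underdeveloped relative to what is actually needed. The counts in the statement are with multiplicity (Definition~\ref{def: 21}), and the paper achieves them not by exhibiting many distinct meridians but by forcing high multiplicity: for (1) it sets every $A_{ij}=f^{m-1}$ for a single linear form $f$ in $x_1,\ldots,x_n$, and the key lemma is that if $f^{\alpha}$ divides $Q$ then $f^{m-2+\alpha}$ divides $\chi(Q)$, so the rows of the $n\times n$ extactic determinant contribute $f$-adic orders $m-1, 2m-3, \ldots$, summing to $\binom{n-1}{2}(m-2)+(n-1)(m-1)$ for the one meridian $f=0$. For (4) it takes $P_1=Ax_2$, $P_2=-Ax_1$, $P_3=P_4=0$ and computes $\mathcal{E}_{\langle x_1,x_2,x_3\rangle}(\chi)=-A^3x_3(x_1^2+x_2^2)$, giving $m-1$ meridians of multiplicity $3$ plus $x_3=0$. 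Your proposals for these parts describe desired factorizations of the extactic polynomial without producing vector fields that realize them, and your distinct-meridian scheme for (1) is not shown to be consistent with tangency to the sphere. As it stands, only the broad strategy survives; each of the four parts needs its construction repaired or supplied.
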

\noindent We prove that there are polynomial vector fields on $\SN^n$ having no meridian hyperplanes, see \Cref{thm:no-meridians}. Then, we compute the bound of invariant parallel hyperplanes.
\begin{theorem}\label{thm:parallel-bound}
Suppose $\chi$ is a polynomial vector field on $\SN^n$ of degree $m$ having finitely many invariant parallel hyperplanes. Then, $\chi$ has at most $m-1$ invariant parallel hyperplanes. Also, this bound is sharp.
\end{theorem}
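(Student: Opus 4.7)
The plan is to reduce the question to counting zeros of a trigonometric polynomial on a meridian great circle of $\SN^n$. Observe first that a parallel hyperplane $\{x_{n+1}=c\}$ (with $|c|<1$) is invariant for $\chi$ if and only if $P_{n+1}$ vanishes on the $(n-1)$-sphere $\{x_{n+1}=c,\;x_1^2+\cdots+x_n^2=1-c^2\}$. Since $\chi$ is tangent to $\SN^n$, one has $\sum_{i=1}^{n+1}x_iP_i\equiv 0$ modulo $\sum_i x_i^2-1$; evaluating at the poles $(0,\ldots,0,\pm 1)$ forces $P_{n+1}(0,\ldots,0,\pm 1)=0$, and these two identities will contribute the crucial extra zeros.

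For any unit vector $v=(v_1,\ldots,v_n,0)$ I would consider the meridian great circle $C_v=\{\sin\theta\cdot v+\cos\theta\cdot e_{n+1}:\theta\in[0,2\pi)\}$ and the restriction $f_v(\theta):=P_{n+1}(v_1\sin\theta,\ldots,v_n\sin\theta,\cos\theta)$, which is a trigonometric polynomial of degree at most $m$. The poles lie on $C_v$ at $\theta=0$ and $\theta=\pi$, so $f_v$ vanishes at these two angles. For each invariant parallel at height $c_k\in(-1,1)$, the meridian $C_v$ meets the parallel at the two angles $\pm\arccos(c_k)$, contributing two further zeros of $f_v$. Writing $r$ for the number of invariant parallel hyperplanes, one thereby obtains at least $2r+2$ distinct zeros of $f_v$ in $[0,2\pi)$. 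The finiteness hypothesis forces $P_{n+1}\not\equiv 0$ on $\SN^n$—otherwise every parallel would be invariant—so $v$ can be chosen with $f_v\not\equiv 0$, and the standard bound of $2m$ on the number of zeros of a trigonometric polynomial of degree $m$ then gives $2r+2\le 2m$, i.e.\ $r\le m-1$.

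For sharpness, pick distinct $c_1,\ldots,c_{m-1}\in(-1,1)$ and put
$$P_{n+1}=x_1\prod_{k=1}^{m-1}(x_{n+1}-c_k),\quad P_1=-x_{n+1}\prod_{k=1}^{m-1}(x_{n+1}-c_k),\quad P_j=0\;\;(2\le j\le n).$$
Then $\sum_i x_i P_i=0$ identically, so $\chi=(P_1,\ldots,P_{n+1})$ is a polynomial vector field on $\SN^n$ of degree $m$, and $P_{n+1}$ vanishes on each hyperplane $x_{n+1}=c_k$, hence all $m-1$ parallels $\{x_{n+1}=c_k\}$ are invariant, realising the bound. The main obstacle in the plan is spotting the pole-vanishing of $P_{n+1}$: without those two extra zeros, the trigonometric-polynomial count only yields the weaker bound $r\le m$, and it is this observation—together with the extraction of a meridian on which $f_v$ is non-degenerate—that is the crux of the argument.
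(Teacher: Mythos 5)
Your route is genuinely different from the paper's. The paper uses its normal form $P_{n+1}=(1-\sum_k x_k^2)f_{n+1}+\sum_{j=1}^{n}A_{(n+1)j}x_j$, notes that $r$ invariant parallels force $\prod_i(x_{n+1}-k_i)$ to divide $P_{n+1}$, and kills the case $r=m$ by setting $x_1=\cdots=x_n=0$: the left side becomes divisible by $1-x_{n+1}^2$ while the right side is a nonzero constant times a product of factors with roots in $(-1,1)$. Your meridian--great--circle restriction and the $2m$ bound on zeros of a degree-$m$ trigonometric polynomial is an attractive geometric substitute, the pole-vanishing observation is correct and is exactly what upgrades $m$ to $m-1$, and your sharpness example is valid (it is essentially the same as the paper's, a rotation in the $(x_1,x_{n+1})$-plane scaled by $\prod_k(x_{n+1}-c_k)$).

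There is, however, one genuine gap: the step ``the finiteness hypothesis forces $P_{n+1}\not\equiv 0$ on $\SN^n$, otherwise every parallel would be invariant.'' In this paper an invariant parallel hyperplane is the algebraic set $\{x_{n+1}-c=0\}$ satisfying $\chi(x_{n+1}-c)=P_{n+1}=K\cdot(x_{n+1}-c)$, i.e.\ $(x_{n+1}-c)$ divides $P_{n+1}$ in $\RR[x_1,\ldots,x_{n+1}]$; it is the whole hyperplane, not the $(n-1)$-sphere $L\cap\SN^n$, that must be invariant. With that definition your implication fails: take $P_{n+1}=1-\sum_{i=1}^{n+1}x_i^2$ and $P_1=\cdots=P_n=0$. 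This is a degree-two vector field on $\SN^n$ (with cofactor $-x_{n+1}$) having \emph{no} invariant parallel hyperplane, since $1-\sum x_i^2$ is irreducible; yet $P_{n+1}$ vanishes identically on $\SN^n$ and every $f_v\equiv 0$, so your zero count gives nothing. Thus the case $(\sum_i x_i^2-1)\mid P_{n+1}$ with $P_{n+1}\neq 0$ is not covered. The patch is cheap: write $P_{n+1}=(\sum_i x_i^2-1)^s g$ with $s\ge 1$ maximal; each invariant parallel forces $(x_{n+1}-c_k)\mid g$ because $x_{n+1}-c_k$ is irreducible and does not divide $\sum_i x_i^2-1$, and $\deg g\le m-2s\le m-2$, so $r\le m-2$ there. (Relatedly, your opening ``if and only if'' is really only an ``only if'' under the paper's definition, but you use it in the correct direction for the upper bound, and in your sharpness example the divisibility $(x_{n+1}-c_k)\mid P_{n+1}$ holds outright, so no harm is done. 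If you want to match the paper's convention of counting with multiplicity, the same circle argument works by counting zeros of $f_v$ with multiplicity.)
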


In Section  \ref{sec:hom_vf}, first, we derive the equation of higher dimensional cones. We show that an $(n-1)$-sphere in $\SN^n$ is invariant if and only if the cone on the $(n-1)$-sphere is invariant, see Lemma \ref{lem:cone-invariant}. We classify all homogeneous polynomial vector fields on $\SN^n$, see \Cref{prop:hom-classify}. We prove the following.
\begin{theorem}\label{thm_sp_grtsp}
Suppose $\chi=(P_1,...,P_{n+1})$ is a homogeneous polynomial vector field on $\SN^n$ of degree $m\leq 2$. Then, every invariant $(n-1)$-sphere of $\chi$ is a great $(n-1)$-sphere.
\end{theorem}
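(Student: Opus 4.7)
The plan is to invoke Lemma~\ref{lem:cone-invariant}: an $(n-1)$-sphere $S=\{L+b=0\}\cap\SN^n$, with $L=\sum a_ix_i$, $\sum a_i^2=1$, $|b|<1$, is $\chi$-invariant iff its cone $C=\{f=0\}\subset\RR^{n+1}$ is invariant, where $f=L^2-b^2|x|^2$; moreover $S$ is great iff $b=0$. So the task reduces to showing that if $\chi f=Kf$ for some polynomial $K$, then $b=0$.

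First I would simplify $\chi f$. Using the normal form from Theorem~\ref{thm:deg-n-vfld}, any homogeneous $\chi$ of degree $m\le 2$ on $\SN^n$ satisfies $\sum_ix_iP_i\equiv 0$ identically: for $m=1$ this is $x^{T}Ax=0$ with $A$ a constant skew-symmetric matrix; for $m=2$, writing $P_i=\sum_j A_{ij}(x)x_j$ with $A=(A_{ij})$ a skew-symmetric matrix of linear forms, $\sum_{i,j}x_iA_{ij}(x)x_j$ is the contraction of a tensor antisymmetric in $(i,j)$ with the symmetric $x_ix_j$. Therefore
\[
\chi f \;=\; 2L(a\cdot P)-2b^2(x\cdot P) \;=\; 2L(a\cdot P),
\]
homogeneous of degree $m+1$, and any cofactor $K$ must be homogeneous of degree $m-1$.

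Next, I would extract a divisibility statement: reducing $2L(a\cdot P)=K(L^2-b^2|x|^2)$ modulo the prime linear form $L$ yields $L\mid K b^2|x|^2$, and since $b\neq 0$ and $L\nmid|x|^2$ (the form $|x|^2$ does not vanish on the hyperplane $\{L=0\}$), one concludes $L\mid K$. Writing $K=LK'$ with $\deg K'=m-2$, the identity becomes $a\cdot P=\tfrac12 K'f$. For $m=1$, $\deg K'=-1$ forces $K'=0$, so $a\cdot P\equiv 0$, which in the notation $\chi=Ax$ reads $Aa=0$. For $m=2$, $K'=c$ is a constant; evaluating $a\cdot P=\tfrac c2 f$ at $x=a$, the left side equals $\sum_{i,j,k}a_ia_ja_k\alpha_{ijk}=0$ by antisymmetry of $\alpha_{ijk}$ (the coefficient of $x_jx_k$ in $P_i$) in its first two indices, while the right side equals $\tfrac c2(1-b^2)$; since $|b|<1$ this forces $c=0$, and again $a\cdot P\equiv 0$.

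In both cases the analysis collapses to $a\cdot P\equiv 0$, meaning $L=a\cdot x$ is a first integral of $\chi$; then the hyperplane $\{L=0\}$ is $\chi$-invariant and $\{L=0\}\cap\SN^n$ is an invariant great $(n-1)$-sphere. The main obstacle I expect is this closing step: the divisibility argument singles out $L$ as the invariant linear polynomial of the cone $f$, and one has to argue that the original invariant $(n-1)$-sphere must be the great level $L=0$ in the resulting invariant pencil $\{L=t\}\cap\SN^n$. That final identification, forcing $b=0$ rather than merely exhibiting some invariant great sphere in the pencil, is the delicate part of the proof and the step that truly relies on the low-degree assumption $m\le 2$.
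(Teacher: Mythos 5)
Your argument follows the same route as the paper's: reduce to the cone via Lemma~\ref{lem:cone-invariant}, use $\sum_i x_iP_i\equiv 0$ to write $\chi f=2L(a\cdot P)$, deduce $L\mid K$ by reducing modulo the prime linear form $L$ (using $b\neq 0$ and $L\nmid |x|^2$), and then kill the cofactor by the degree count $\deg K=m-1\le 1$. All of that is correct, and your evaluation at $x=a$ in the quadratic case is a clean way to get $c=0$ (even more simply, $(a\cdot P)(a)=(x\cdot P)|_{x=a}=0$). But you stop at $a\cdot P\equiv 0$ and correctly observe that this does not force $b=0$: it makes $L$ a first integral, so \emph{every} level $\{L+t=0\}\cap\SN^n$ is invariant, including the non-great ones. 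That is a genuine gap relative to the stated conclusion, and no further argument will close it in the direction you are looking, because once $a\cdot P\equiv 0$ is on the table the original sphere with $b\neq 0$ really is invariant.

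The paper closes the argument differently: it derives a \emph{contradiction} from $a\cdot P\equiv 0$. After rotating so that $a=e_{n+1}$, that identity reads $P_{n+1}=0$, and the paper declares this incompatible with $\chi$ being a homogeneous vector field of degree $m$ (reading the definition as requiring each component to be a nonzero homogeneous polynomial of degree $m$). So the conclusion is not that the given sphere must have $b=0$, but that a homogeneous $\chi$ of degree $\le 2$ admits no invariant non-great $(n-1)$-sphere at all. Your instinct that something delicate happens here is justified: if a vanishing component is permitted, the statement fails --- $\chi=(x_2,-x_1,0)$ on $\SN^2$ is degree-one homogeneous in the weaker sense and every parallel $\{x_3+d=0\}\cap\SN^2$ is invariant. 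The missing sentence in your write-up is exactly this appeal to the nonvanishing of each $P_i$. Note also that this last step does not ``rely on the low-degree assumption $m\le 2$'' as you suggest; the degree hypothesis was already spent in forcing $K'$ to be constant.
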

\noindent We give a necessary and sufficient condition when a polynomial vector field on $\SN^n$ has an invariant $(n-1)$-sphere which is not a great sphere, see \Cref{prop:inv-sp-not-grt}. Then, we prove the following.
\begin{theorem}\label{thm_inv_grt_deg1}
   Suppose $\chi$ is degree one vector field on $\SN^n$ of the form \eqref{eq:deg-n-form}. Then $\{\sum\limits_{i=1}^{n+1}a_ix_i=0\}\cap \SN^n$ is an invariant great $(n-1)$-sphere of $\chi$  if and only if $\begin{pmatrix}
       a_1&\cdots&a_{n+1}
   \end{pmatrix}^t$ is an eigenvector of $A=(A_{ij})$ corresponding to the eigenvalue 0.
\end{theorem}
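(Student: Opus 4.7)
For a degree one vector field of the form \eqref{eq:deg-n-form} the sum over $j=1,\dots,d(1)$ is empty (since $d(1)=0$), so $\chi$ reduces to the linear map $\chi(x)=Ax$ with $A$ a constant skew-symmetric $(n+1)\times(n+1)$ matrix. The plan is to compute the cofactor condition directly for $f=\sum_{i=1}^{n+1}a_ix_i$ and then exploit skew-symmetry to force the eigenvalue to be zero.

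First, I would observe that a great $(n-1)$-sphere $\{f=0\}\cap\SN^n$ is invariant for $\chi$ iff the hyperplane $\{f=0\}$ itself is invariant in $\RR^{n+1}$: since $\{f=0\}$ passes through the origin, it equals the cone on the sphere, and by Lemma \ref{lem:cone-invariant} the two invariance properties coincide. So it suffices to find $K\in\RR[x_1,\dots,x_{n+1}]$ with $\chi f=Kf$. A direct computation gives
\begin{equation*}
\chi f=\sum_{i=1}^{n+1}P_i\,a_i=\sum_{i,j}a_iA_{ij}x_j=\sum_{j=1}^{n+1}(A^ta)_j\,x_j,
\end{equation*}
where $a=(a_1,\dots,a_{n+1})^t$. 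Since $\chi f$ is linear in $x$ and $f$ is nonzero linear, the cofactor $K$ must be a constant $\lambda$, and the invariance condition becomes $A^ta=\lambda a$, equivalently $Aa=-\lambda a$ using skew-symmetry.

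The key step is now to rule out nonzero eigenvalues. Taking the inner product with $a$ and using $a^tAa=0$ for any skew-symmetric $A$, one obtains $\lambda\|a\|^2=0$. Since $a\ne 0$ (the hyperplane is a genuine $n$-dimensional subspace), this forces $\lambda=0$, so $Aa=0$, i.e. $a$ is an eigenvector of $A$ for the eigenvalue $0$. The converse direction is immediate: if $Aa=0$, then $\chi f\equiv 0$, so $\{f=0\}$ is invariant with zero cofactor, and therefore $\{f=0\}\cap\SN^n$ is an invariant great $(n-1)$-sphere.

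I do not anticipate a serious obstacle here; the only subtlety worth flagging in the write-up is the reduction via Lemma \ref{lem:cone-invariant} (so that one works with the whole hyperplane rather than with the sphere directly) and the standard fact $a^tAa=0$ for real skew-symmetric $A$, which is what converts an a priori complex-looking eigenvalue condition into the precise statement that $a\in\ker A$.
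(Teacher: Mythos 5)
Your proposal is correct and follows essentially the same route as the paper: reduce to invariance of the hyperplane via Lemma \ref{lem:cone-invariant}, derive $Aa=-\lambda a$ from the cofactor equation, and conclude $\lambda=0$ from skew-symmetry. The only cosmetic difference is that you prove the last step explicitly via $a^tAa=0$, whereas the paper simply cites that $0$ is the only real eigenvalue of a real skew-symmetric matrix.
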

\noindent We also discuss invariant great $(n-1)$-sphere in $\SN^n$ for a homogeneous quadratic vector field. We denote $X^t:=(x_1,\ldots,x_{n+1})$ where $x_i\in \RR$ for $i=1,\ldots,n+1$.
\begin{theorem}\label{thm_inv_grt_deg2}
Let $\chi=(P_1, \ldots, P_{n+1})$ be a homogeneous quadratic vector field on $\SN^n$ with $P_i=X^t B_i X$ and $A_i:=[(B_1e_i) \cdots (B_{n+1}e_{i})]$ where each $B_i$ is a constant symmetric matrix. Then $\{{\bf a}^t X =0\} \cap \SN^n$ with ${\bf a}^t:=(a_1 \ldots a_{n+1})$ is an invariant great $(n-1)$-sphere if and only if $2\sum\limits_{i=1}^{n+1} a_i B_i = {\bf a}{\bf b}^t + {\bf b}{\bf a}^t$ for some ${\bf b}^t \in \RR^{n+1}$.

Moreover, if $\bf{a}$ is an eigenvector of each $A_i$ with eigenvalue $\lambda_i$, then ${\bf a}^t X=0$ produces an invariant great $(n-1)$-sphere with cofactor $\sum\limits_{i=1}^{n+1}\lambda_ix_i$. 
\end{theorem}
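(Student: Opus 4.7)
The plan is to first translate geometric invariance of the great sphere into a polynomial identity via \Cref{lem:cone-invariant}, then read the identity off by comparing the symmetric parts of the resulting quadratic forms.

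First, since the hyperplane $H := \{\mathbf{a}^t X = 0\} \subset \RR^{n+1}$ is exactly the cone over $H \cap \SN^n$, \Cref{lem:cone-invariant} tells us that the great $(n-1)$-sphere $H \cap \SN^n$ is invariant under $\chi$ if and only if $H$ is an invariant algebraic set of $\chi$ in $\RR^{n+1}$, that is, $\chi f = K f$ for some $K \in \RR[x_1,\ldots,x_{n+1}]$, where $f := \mathbf{a}^t X$. Because $\chi$ is homogeneous of degree two and $f$ is homogeneous of degree one, $\chi f$ is homogeneous of degree two, so $K$ must be homogeneous of degree one; write $K = \mathbf{b}^t X$ for some $\mathbf{b} \in \RR^{n+1}$.

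Next I would expand both sides of $\chi f = K f$. Using $P_i = X^t B_i X$,
\begin{equation*}
\chi f = \sum_{i=1}^{n+1} a_i P_i = X^t M X, \qquad M := \sum_{i=1}^{n+1} a_i B_i,
\end{equation*}
while $K f = (\mathbf{b}^t X)(\mathbf{a}^t X) = X^t (\mathbf{b}\mathbf{a}^t) X$. Two quadratic forms $X^t N_1 X$ and $X^t N_2 X$ agree as polynomials if and only if $N_1 + N_1^t = N_2 + N_2^t$; since $M$ is symmetric this reduces to $2M = \mathbf{b}\mathbf{a}^t + \mathbf{a}\mathbf{b}^t$, which is exactly the stated condition. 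This settles the first equivalence.

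For the moreover part, I would compute
\begin{equation*}
A_i \mathbf{a} = \sum_{j=1}^{n+1} a_j (B_j e_i) = \Bigl(\sum_{j=1}^{n+1} a_j B_j\Bigr) e_i = M e_i,
\end{equation*}
so the hypothesis $A_i \mathbf{a} = \lambda_i \mathbf{a}$ for every $i$ says precisely that the $i$-th column of $M$ equals $\lambda_i \mathbf{a}$. Equivalently $M = \mathbf{a} \Lambda^t$ with $\Lambda := (\lambda_1,\ldots,\lambda_{n+1})^t$. Since $M$ is symmetric, $\mathbf{a}\Lambda^t = \Lambda \mathbf{a}^t$, so $2M = \mathbf{a}\Lambda^t + \Lambda\mathbf{a}^t$; the first part then applies with $\mathbf{b} = \Lambda$ and yields the cofactor $K = \Lambda^t X = \sum_{i=1}^{n+1} \lambda_i x_i$. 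There is no deep obstacle in this argument; it is essentially a dictionary between Darboux-type invariance and symmetric bilinear forms, and the only step requiring genuine attention is the identification $A_i \mathbf{a} = M e_i$, which is what couples the eigenvector hypothesis to the structure of the symmetric matrix $M$ and pins down the cofactor.
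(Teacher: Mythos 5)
Your proof is correct and takes essentially the same route as the paper: reduce to invariance of the hyperplane $\{\mathbf{a}^tX=0\}$ via Lemma \ref{lem:cone-invariant}, equate the quadratic forms $X^tMX$ and $X^t(\mathbf{b}\mathbf{a}^t)X$ by comparing symmetric parts to get $2\sum a_iB_i=\mathbf{a}\mathbf{b}^t+\mathbf{b}\mathbf{a}^t$, and use $A_i\mathbf{a}=Me_i$ for the cofactor. The only cosmetic difference is that you feed the eigenvector hypothesis back through the first equivalence with $\mathbf{b}=\Lambda$, while the paper computes $\chi(\mathbf{a}^tX)=X^t\mathbf{a}\Lambda^tX$ directly; these are the same calculation.
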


\section{Preliminaries}\label{sec:basic_on_vf}
In this section, we recall the concept of extactic polynomial for a polynomial vector field. Then, we describe how an extactic polynomial determines invariant algebraic sets. We recall some basic definitions. Lastly, we perform the computations to convert a polynomial vector field by the stereographic projection from the north pole.

In order to study invariant algebraic sets on an algebraic hypersurface in $\mathbb{R}^d$,  one may use the tool extactic polynomial. We briefly recall this concept following \cite{bolanos2013number}.
Let $W$ be a $k$-dimensional vector subspace of $\mathbb{R}[x_1,x_2,\dots,x_d]$ with a basis $\{     v_1,\dots,v_k\}$. Then the extactic polynomial of the vector field $\chi$ associated to $W$ is given by 
\begin{equation*}
\mathcal{E}_W(\chi) = \det \begin{pmatrix}
v_1 &  v_2 & \cdots &   v_k \\ \chi(v_1) & \chi(v_2) & \cdots & \chi(v_k) \\ \vdots & \vdots & \ddots  & \vdots \\ \chi^{k-1}(v_1) & \chi^{k-1}(v_2) & \cdots & \chi^{k-1}(v_k) \end{pmatrix},
\end{equation*}
 where $\chi^j(v_i) = \chi^{j-1}(\chi(v_i))$ for any $i,j$. We note that the definition of the extactic polynomial is independent of the choice of basis of $W$. 
  
   In this paper, we always count number of invariant algebraic set taking into account multiplicities. We recall the definition of the multiplicity of an algebraic set given by the zero set of a polynomial $f$.

\begin{definition}  \label{def: 21}
The algebraic set given by $f=0$ with $f \in W$ has multiplicity $m$ for $\chi$ if $\mathcal{E}_W(\chi) \neq 0$ and $f^m$ divides   $\mathcal{E}_W(\chi) $ and for $m' > m$, $f^{m'}$ is not a factor of $\mathcal{E}_W(\chi) $.  
 \end{definition}
 See more details regarding multiplicity in \cite{christopher2007multiplicity, LliZh09}. We shall use the following result whose proof can be found in \cite[Proposition 1]{LliMed07}.
 
 \begin{proposition} \label{Prop:1}
  Let $\chi$ be a polynomial vector field in $\mathbb{R}^d$ and $W$ a finite dimensional vector subspace of $\mathbb{R}[x_1,x_2, \dots,x_d]$ with $\dim(W) >1$. If $\{ f=0  \}$ is an invariant algebraic set for the vector field $\chi$ and $f \in W,$ then $f$ is a factor of $\mathcal{E}_W (\chi).$
  \end{proposition}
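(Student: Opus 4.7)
The plan is to reduce invariance of the great $(n-1)$-sphere $\{\mathbf{a}^t X=0\}\cap\SN^n$ to a Darboux-type identity and then translate that identity into linear algebra on the symmetric matrices $B_i$. By Lemma~\ref{lem:cone-invariant}, invariance of a great $(n-1)$-sphere in $\SN^n$ is equivalent to invariance of its cone, which for a great sphere coincides with the hyperplane $\{\mathbf{a}^t X=0\}$ itself. Hence the condition to analyze is $\chi(\mathbf{a}^t X)=K\cdot\mathbf{a}^t X$ for some cofactor $K\in\RR[x_1,\dots,x_{n+1}]$.

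First, I would compute
\[
\chi(\mathbf{a}^t X)=\sum_{i=1}^{n+1}a_iP_i=\sum_{i=1}^{n+1}a_i\,X^tB_iX=X^t\!\left(\sum_{i=1}^{n+1}a_iB_i\right)\!X,
\]
which is homogeneous of degree two. Since $\mathbf{a}^tX$ is homogeneous of degree one, $K$ must be homogeneous of degree one, so $K=\mathbf{b}^tX$ for some $\mathbf{b}^t\in\RR^{n+1}$. The identity $\chi(\mathbf{a}^tX)=K\cdot\mathbf{a}^tX$ then becomes
\[
X^t\!\left(\sum_{i=1}^{n+1}a_iB_i\right)\!X=(\mathbf{b}^tX)(\mathbf{a}^tX)=X^t(\mathbf{b}\mathbf{a}^t)X.
\]
Two quadratic forms in $X$ agree identically iff their symmetric parts coincide. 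Since $\sum a_iB_i$ is already symmetric and the symmetric part of $\mathbf{b}\mathbf{a}^t$ is $\tfrac12(\mathbf{a}\mathbf{b}^t+\mathbf{b}\mathbf{a}^t)$, this equality is equivalent to $2\sum a_iB_i=\mathbf{a}\mathbf{b}^t+\mathbf{b}\mathbf{a}^t$, establishing the iff.

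For the moreover part, I would unpack the definition of $A_i$. Writing $M:=\sum_j a_jB_j$ (which is symmetric by construction), the fact that the $j$-th column of $A_i$ is $B_je_i$ yields
\[
A_i\mathbf{a}=\sum_j a_j(B_je_i)=Me_i.
\]
The hypothesis $A_i\mathbf{a}=\lambda_i\mathbf{a}$ for every $i$ therefore says that the $i$-th column of $M$ equals $\lambda_i\mathbf{a}$; equivalently, $M=\mathbf{a}\mathbf{b}^t$ with $\mathbf{b}^t=(\lambda_1,\dots,\lambda_{n+1})$. Symmetry of $M$ then forces $\mathbf{a}\mathbf{b}^t=\mathbf{b}\mathbf{a}^t$, so $\mathbf{a}\mathbf{b}^t+\mathbf{b}\mathbf{a}^t=2\mathbf{a}\mathbf{b}^t=2M$, which is precisely the criterion from the first part. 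The cofactor is therefore $K=\mathbf{b}^tX=\sum\lambda_ix_i$, as claimed.

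The main obstacle is the bookkeeping in the moreover part: the matrices $A_i$ are built column-by-column from all of the $B_j$'s, and one must carefully convert the $(n+1)$ eigenvalue conditions $A_i\mathbf{a}=\lambda_i\mathbf{a}$ into the single rank-one identity $M=\mathbf{a}\mathbf{b}^t$, and then exploit symmetry of $M$ to verify the criterion. The rest reduces to a standard symmetrization argument for bilinear forms.
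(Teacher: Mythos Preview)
Your proposal proves the wrong statement. The proposition you were asked to address is Proposition~\ref{Prop:1}: if $\{f=0\}$ is an invariant algebraic set for $\chi$ and $f\in W$, then $f$ divides the extactic polynomial $\mathcal{E}_W(\chi)$. What you have written is instead a proof of Theorem~\ref{thm_inv_grt_deg2}, the characterization of invariant great $(n-1)$-spheres for a homogeneous quadratic field on $\SN^n$ via the identity $2\sum_i a_iB_i=\mathbf{a}\mathbf{b}^t+\mathbf{b}\mathbf{a}^t$. Nothing in your argument touches the extactic determinant or the divisibility claim of Proposition~\ref{Prop:1}.

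For the record, the paper does not supply its own proof of Proposition~\ref{Prop:1}; it simply cites \cite[Proposition~1]{LliMed07}. The standard argument runs as follows: choose a basis $v_1,\dots,v_k$ of $W$ with $v_1=f$. Invariance gives $\chi(f)=Kf$ for some polynomial $K$, and by induction $\chi^j(f)$ is a multiple of $f$ for every $j\ge 0$. Hence the entire first column of the matrix defining $\mathcal{E}_W(\chi)$ is divisible by $f$, and so $f\mid\mathcal{E}_W(\chi)$. Separately, your argument for Theorem~\ref{thm_inv_grt_deg2} is correct and matches the paper's own proof of that theorem, but that is not what was asked.
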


% We quote the following result from \cite{LliBol11}.

% \begin{proposition} \label{thm:1}
% Let $S$ be a regular algebraic hypersurface of degree $d$ in  $\mathbb{R}^{n+1}$. The polynomial vector field $\chi$ on $S$ of degree $m>0$ admits ${n+m \choose n+1} - {n+m-d \choose n+1} +n$ invariant algebraic hypersurfaces irreducible in $\mathbb{C}[x_1,x_2,\cdots,x_{n+1}]$ if and only if $\chi$ has a rational first integral.
% \end{proposition}

\begin{definition}
Let $U$ be an open subset of $\RR^d$. A non-constant smooth map $G \colon U \to \mathbb{R}$ is called a first integral of the vector field \eqref{eq: I2} on $U$ if $G$ is constant on all solution curves of the system \eqref{eq: I1} contained in $U$; i.e., $G(x_1(t),\ldots,x_d(t)) =$ constant for all values of $t$ for which the solution $(x_1(t), \ldots, x_d(t))$ is defined and contained in $U$.
\end{definition}
Note that the map $G$ is a first integral of the vector field \eqref{eq: I2} on $U$ if and only if $\chi G=0$ on $U$. The function $G$ is called the rational first integral when $G$ is a rational function. If the vector field \eqref{eq: I2} has a rational first integral, then the vector field possesses infinitely many invariant algebraic sets. A proof of this fact can be found on page 102 of \cite{Jou79}.

\begin{definition}
    The vector field \eqref{eq: I2} is called completely integrable in an open set $U$ if it has $(d-1)$ independent first integrals on $U$.
\end{definition}
If the vector field \eqref{eq: I2} is completely integrable with $(d-1)$ independent first integrals $G_1,\ldots, G_{d-1}$ then the orbits of the system are contained in $\bigcap\limits_{i=1}^{d-1}\{G_i=c_i\}$ for some $c_i\in \RR$. The complete integrability of vector fields on $\RR^d$ is discussed in \cite{LlRaRa20}.

\begin{definition}
    A vector field $\chi=(P_1,\ldots,P_{2n})$ is called Hamiltonian vector field if there exists a smooth map $H:\RR^{2n}\to \RR$ such that $P_{2i-1}=-\frac{\partial H}{\partial x_{2i}}$ and $P_{2i}=\frac{\partial H}{\partial x_{2i-1}}$ for $1\leq i\leq n$. The function $H$ is called a Hamiltonian for $\chi$.
\end{definition}
If $H$ is a Hamiltonian for $\chi$, then $H$ is a first integral of $\chi$. If $\chi$ has $n$ functionally independent Poisson commuting first integrals including the Hamiltonian $H$, then the differential system corresponding to $\chi$ can be solved by quadratures, see \cite[Chapter 1]{arutyunov2019elements}.

Next, we discuss the transformation of polynomial vector fields under the stereographic projection. We consider stereographic projection from the {\it north pole} $:={(0, \ldots, 0,1) \in \RR^{n+1}}$.  Note that the computation has been done for $n=2$ in \cite{LliZho11}. Here, we perform the computations for arbitrary $n>2$. We recall from the proof of \cite[Theorem 59.3]{Mun_2nded} that 
$$f \colon \SN^n\setminus \{(0,\ldots,0,1)\}\to \RR^n~\mbox{defined by}~ x=(x_1,...,x_{n+1})\mapsto u=(u_1,...,u_n)$$ where $u_i=\frac{x_i}{1-x_{n+1}}$ for $i=1,...,n$ is called the Stereographic projection from the north pole. The inverse map $$f^{-1}:\RR^n \to \SN^n\setminus\{0,\ldots,0,1\}~\mbox{is given by}~(u_1,...,u_n)\mapsto (x_1,...,x_{n+1})$$ where $x_i=\frac{2u_i}{|u|^2+1}$ for $i=1,...,n$ and $x_{n+1}=\frac{|u|^2-1}{|u|^2+1}$.

We note that $f$ is a diffeomorphism. Let $\chi=(P_1,...,P_{n+1})$ be a polynomial vector field on $\SN^n$ and the induced vector field be given by $f_*\chi=(Q_1,...,Q_n)$ on $\RR^n$. Then
$$\begin{pmatrix}
    Q_1 &\cdots &Q_n
\end{pmatrix}^T=\Big(\frac{\partial u_i}{\partial x_j}\Big)_{n\times (n+1)}\begin{pmatrix}
    P_1&\cdots&P_{n+1}
\end{pmatrix}^T.$$
For $i,j\in \{1,...,n\}$, we get $$ \frac{\partial u_i}{\partial x_{n+1}}=\frac{x_i}{(1-x_{n+1})^2}=\frac{|u|^2+1}{2}u_i \mbox{ and } \frac{\partial u_i}{\partial x_j}=\begin{cases}
    \frac{1}{1-x_{n+1}}=\frac{|u|^2+1}{2}&~\mbox{if}~i=j,\\
    0& ~\mbox{if}~i\neq j .
\end{cases}
$$ Hence, $$Q_i=\frac{|u|^2+1}{2}(P_i+u_iP_{n+1}).$$

Suppose that $\deg P_i\leq m$ for $i=1, \ldots, n+1$. We define 
$$\Tilde{P_i}:=(|u|^2+1)^mP_i\Big(\frac{2u_1}{|u|^2+1},...,\frac{2u_n}{|u|^2+1},\frac{|u|^2-1}{|u|^2+1}\Big),$$
for $i=1, \ldots, n+1$. Observe that $\Tilde{P_i}$ is a polynomial in $u_1,...,u_n$. So,
$$Q_i=\frac{|u|^2+1}{2(|u|^2+1)^m}(\Tilde{P_i}+u_i\Tilde{P}_{n+1}).$$
Introducing a new independent variable `$s$' so that $ds=\frac{dt}{2(|u|^2+1)^{m-1}}$, we get
$$\frac{du_i}{ds}=\Tilde{P_i}+u_i\Tilde{P}_{n+1}.$$
Hence, the push-forward of the polynomial vector field $\chi=(P_1,...,P_{n+1})$ can be given by the polynomial vector field $\chi'=(R_1,...,R_n)$ under the stereographic projection where $R_i=\Tilde{P_i}+u_i\Tilde{P}_{n+1}$ for $i=1,\ldots,n$.

Note that a vector field on $\SN^n$ is invariant by the special orthogonal group $SO(n+1)$. So, if a polynomial vector field on $\SN^n$ has an invariant $(n-1)$-sphere ${\{\sum\limits_{i=1}^{n+1}a_ix_i+b=0\}\cap \SN^n}$, then we can assume that the $(n-1)$-sphere to be $\{x_{n+1}+b=0\}\cap \SN^n$ with $|b| <1$.

Observe that any $(n-1)$-sphere in $\SN^n$ is connected for $n\geq 2$. Also, an invariant $0$-sphere in $\SN^1$ is the union of two singular points. So, in what follows, we consider $n\geq 2$ whenever we deal with invariant $(n-1)$-sphere in $\SN^n$.
%=============================
%=============================
%=============================

\section{Polynomial vector fields on $\SN^n$}\label{sec:vf_on_sn}

In this section, we characterize any polynomial vector fields on the standard $n$-sphere in $\RR^{n+1}$, discuss their first integrals and complete integrability. We study polynomial vector fields on $\SN^n$ of degree up to two having an invariant great $(n-1)$-sphere in $\SN^n$. We characterize degree one Hamiltonian vector fields on $\SN^{2n-1}$ and give sufficient conditions for higher degree vector fields to be Hamiltonian. 

The following result is algebraic. However, it is crucial in classifying vector fields on $\SN^n$.  
\begin{lemma}\label{lem:sum-pi-xi-0}
    Let $Q_i\in \RR[x_1,...,x_d]$ for $i=1,..., d$ such that the polynomial $\sum\limits_{i=1}^d Q_ix_i^k$ is zero for some $k\geq 1$. Then $Q_i=\sum\limits_{j=1}^d  A_{ij}x_j^k$ for some $A_{ij}\in \RR[x_1,...,x_d]$ such that $A=(A_{ij})_{d\times d}$ is a skew-symmetric matrix.
\end{lemma}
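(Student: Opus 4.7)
The ``if'' direction is immediate: if $Q_i=\sum_j A_{ij}x_j^k$ with $A$ skew-symmetric, then $\sum_i Q_ix_i^k=\sum_{i,j}A_{ij}x_i^kx_j^k$ vanishes by antisymmetrizing the indices $i,j$. So only the ``only if'' direction needs work. The cleanest conceptual framing is that $x_1^k,\ldots,x_d^k$ form a regular sequence in $\RR[x_1,\ldots,x_d]$, and by the Koszul lemma every syzygy among them is generated by the trivial ones $x_i^k\,e_j-x_j^k\,e_i$; the skew matrix $A$ is precisely the coefficient vector in this generating set. Since the paper's tone is elementary, I would instead give a self-contained proof by induction on $d$.

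The plan is as follows. For $d=1$ the identity $Q_1x_1^k=0$ forces $Q_1=0$ and the empty skew matrix works. Assume the result holds for $d-1$ variables. Given the relation $\sum_{i=1}^d Q_ix_i^k=0$ in $\RR[x_1,\ldots,x_d]$, I would expand each $Q_i$ as a polynomial in $x_d$ with coefficients in $R:=\RR[x_1,\ldots,x_{d-1}]$, writing
\begin{equation*}
Q_i=\sum_{l\geq 0}q_{i,l}\,x_d^l,\qquad q_{i,l}\in R.
\end{equation*}
Collecting coefficients of $x_d^l$ in the vanishing relation gives, for $0\le l<k$, the equation
\begin{equation*}
\sum_{i=1}^{d-1}q_{i,l}\,x_i^k=0,
\end{equation*}
and, for $l\ge k$, the equation $q_{d,\,l-k}=-\sum_{i=1}^{d-1}q_{i,l}\,x_i^k$.

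The first batch of equations (one for each $l<k$) is precisely the inductive hypothesis applied in $R$, so I obtain skew-symmetric matrices $A^{(l)}=(A_{ij}^{(l)})_{1\le i,j\le d-1}$ over $R$ with $q_{i,l}=\sum_{j=1}^{d-1}A_{ij}^{(l)}x_j^k$ for each $l<k$. I would then define
\begin{equation*}
A_{ij}:=\sum_{l=0}^{k-1}A_{ij}^{(l)}\,x_d^l\quad(i,j\le d-1),\qquad A_{id}:=\sum_{l\ge 0}q_{i,\,l+k}\,x_d^l\quad(i\le d-1),
\end{equation*}
and set $A_{di}=-A_{id}$, $A_{dd}=0$. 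Skew-symmetry of the resulting $A$ is immediate from the inductive skew-symmetry of each $A^{(l)}$ and from the definition $A_{di}=-A_{id}$. To verify $Q_i=\sum_{j=1}^d A_{ij}x_j^k$ for $i\le d-1$, I would split $Q_i$ as $\sum_{l<k}q_{i,l}x_d^l+\sum_{l\ge k}q_{i,l}x_d^l$; the first piece produces $\sum_{j\le d-1}A_{ij}x_j^k$ by the inductive formula, while the second piece produces $A_{id}x_d^k$ by reindexing. For $i=d$, the second batch of equations rewrites $Q_d=\sum_{l\ge 0}q_{d,l}x_d^l$ exactly as $\sum_{i=1}^{d-1}A_{di}x_i^k$, as required.

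There is no real obstacle here beyond bookkeeping: once one has the idea of splitting $Q_i$ according to whether the $x_d$-degree is $<k$ or $\ge k$, the low-degree part is handled by induction and the high-degree part produces the last column of $A$. The only subtle point is ensuring $A_{ii}=0$, which follows because each $A_{ii}^{(l)}=0$ by the inductive skew-symmetry. One could shorten the exposition by quoting the Koszul syzygy result for the regular sequence $x_1^k,\ldots,x_d^k$, but the inductive argument keeps the proof elementary and self-contained, consistent with the level of the rest of the paper.
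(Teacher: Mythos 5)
Your proof is correct, but it takes a genuinely different route from the paper's. The paper argues in two stages: first, for each fixed $i$, it isolates $Q_i x_i^k = -\sum_{j\neq i} Q_j x_j^k$ and expands the right-hand side in powers of $x_i$ to obtain a representation $Q_i=\sum_j B_{ij}x_j^k$ with zero diagonal but no symmetry; it then runs a second induction over the rows, with delicate monomial-by-monomial bookkeeping, to modify this matrix column by column into a skew-symmetric one. Your argument instead inducts on the number of variables $d$: splitting each $Q_i$ by its $x_d$-degree, the coefficients of $x_d^l$ with $l<k$ give syzygies among $x_1^k,\ldots,x_{d-1}^k$ in one fewer variable (handled by the inductive hypothesis), while the coefficients with $l\geq k$ express $Q_d$ directly in terms of the last column, so skew-symmetry is built into the construction rather than imposed afterwards. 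The steps all check out: the coefficient extraction is exact, the verification of $Q_i=\sum_j A_{ij}x_j^k$ for $i\le d-1$ and for $i=d$ is a correct reindexing, and the base case $d=1$ forces $Q_1=0$ (the $1\times 1$ matrix is then the zero matrix rather than the ``empty'' one, a harmless slip). Your observation that the lemma is the Koszul syzygy statement for the regular sequence $x_1^k,\ldots,x_d^k$ is also accurate and identifies the conceptual reason the result holds. The trade-off is that your induction is shorter and makes the skew-symmetry automatic, whereas the paper's longer argument never leaves the ambient ring in $d$ variables and needs no change of the number of variables.
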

\begin{proof}
   We have $Q_i x_i^k = -\sum\limits_{j=1,j\neq i}^d Q_jx_j^k$.
   There exists a non-negative integer $n_j$ such that we can write 
   $Q_j = \sum\limits_{r=0}^{n_j} \xi_{jr} x_i^r $  for some $\xi_{jr} \in \RR[x_1, \ldots, x_{i-1}, x_{i+1}, \ldots, x_d]$ for $j=1, \ldots, d$. Thus, $$Q_ix_i^k = - \sum_{j=1,j\neq i}^d  \Big( \sum_{r=k}^{n_j} \xi_{jr} x_i^r \Big)x_j^k - \sum_{r=0}^{k-1} \Big( \sum_{j=1,j\neq i}^d \xi_{jr} x_j^k  \Big) x_i^r.$$ Since, $x_1,...,x_d$ are mutually prime, $\sum\limits_{j=1,j\neq i}^d  \xi_{jr} x_j^k=0$. Hence, $Q_i$ can be written as $\sum\limits_{j=1}^d B_{ij}x_j^k$ where $B_{ij}:=-\sum\limits_{r=k}^{n_j} \xi_{jr} x_i^{r-k} \in \RR[x_1,...,x_d]$ if $i\neq j$ and $B_{ii}=0$ for $i=1, \ldots, d$. Such a matrix $(B_{ij})_{d \times d}$ will be called \textit{a matrix for $(Q_1, \ldots, Q_d)$}. 
   
   Now, we show by induction that there exists a skew-symmetric matrix $A=(A_{ij})_{d\times d}$ for $(Q_1, \ldots, Q_d)$. Observe that if $(B_{ij})_{d\times d}$ is a matrix for $(Q_1, \ldots, Q_d)$ then $B_{11}=0$. Assume that there exists a matrix $(C_{ij})_{d \times d}$ for $(Q_1, \ldots, Q_d)$ such that $C_{ij} = - C_{ji}$ for $1 \leq i, j \leq \ell-1 <d$.  
%   Observe that $Q_1=\sum\limits_{j=1}^n A_{1j}x_j^k$. Suppose that up to $\ell-1$-th row the skew-symmetricity is preserved, that is $A_{ij}=-A_{ji}$ for $i,j\leq \ell -1$. We claim that the $\ell$-th row will also preserve the skew-symmetricity.
   We write $C_{\ell 1}=-C_{1\ell}+C'_{\ell 1}$ for some $C'_{\ell 1}\in \RR[x_1,...,x_d]$. Hence, $Q_{\ell}=(-C_{1\ell}+C'_{\ell 1})x_1^k+\sum\limits_{j=2, j\neq \ell}^d C_{\ell j}x_j^k$. Suppose $C'_{\ell 1}\neq 0$. Then $ -C'_{\ell 1} = \alpha x_1^{m_1} \cdots x_d^{m_d} + \phi_{\ell 1}$ for a unique $\phi_{\ell 1} \in \RR[x_1, \ldots, x_d]$ and $\alpha \neq 0$. Then the induction hypothesis and  $\sum\limits_{i=1}^n Q_ix_i^k=0$ implies the following. $$(\alpha x_1^{m_1}...x_d^{m_d} + \phi_{\ell 1})x_1^kx_\ell^k = \sum\limits_{j= \ell+1}^d C_{1 j}x_1^kx_j^k + \sum\limits_{s=2}^{\ell-1} \sum\limits_{j= \ell}^d C_{s j} x_s^kx_j^k + \sum\limits_{j=2}^d C_{\ell j}x_\ell^k x_j^k + \sum\limits_{s=\ell+1}^{d} \sum\limits_{j= 1}^d C_{s j} x_s^kx_j^k.$$ We claim that at least one of $m_2, \ldots, m_{\ell -1}, m_{\ell +1},...,m_d$ should be bigger than or equal to $k$. Suppose that $0 \leq m_2, \ldots, m_{\ell -1}, m_{\ell +1}, ..., m_d < k$. Then the expression of the above identity says that there is no monomial of the form $x_1^{m_1+k} x_2^{m_2} \cdots x_{\ell-1}^{m_{\ell-1}} x_\ell^{m_\ell+k} x_{\ell+1}^{m_{\ell+1}} \cdots x_d^{m_d}$ on the right side of this identity. So, this contradiction proves the claim. Applying the same methods on each monomials of $\phi_{\ell 1}$, one can write $Q_\ell = -C_{1\ell} x_1^k + \sum\limits_{j=2}^d C^1_{\ell j}x_j^k$ for some $C^1_{\ell j} \in \RR[x_1, \ldots, x_d]$ with $C^1_{\ell \ell}=0$. Note that if $C'_{\ell 1}=0$ then $Q_\ell$ is in this form already and $C^1_{\ell j}=C_{\ell j}$ for $j=2, \ldots, d$.
   
   Now we write $C^1_{\ell 2} = -C_{2 \ell}+C'_{\ell 2}$ for some $C'_{\ell 2}\in \RR[x_1,...,x_d]$. Suppose $C'_{\ell 2}\neq 0$. Then $ -C'_{\ell 2} = \beta x_1^{n_1} \cdots x_d^{n_d} + \phi_{\ell 2}$ for a unique $\phi_{\ell 2} \in \RR[x_1, \ldots, x_d]$ and $\beta \neq 0$. We claim that at least one of $n_3, \ldots, n_{\ell -1}, n_{\ell +1},...,n_d$ should be bigger than or equal to $k$. Suppose that $0 \leq n_3, \ldots, n_{\ell -1}, n_{\ell +1}, ..., n_d < k$. Now, the induction hypothesis and $\sum\limits_{i=1}^n Q_ix_i^k=0$ implies the following. $$
   (\beta x_1^{n_1}...x_d^{n_d} + \phi_{\ell 2})x_2^kx_\ell^k = \sum\limits_{s=1}^2 \sum\limits_{j= \ell+1}^d C_{s j}x_s^kx_j^k + \sum\limits_{s=3}^{\ell-1} \sum\limits_{j= \ell}^d C_{s j} x_s^kx_j^k + \sum\limits_{j=3}^d C^1_{\ell j}x_\ell^k x_j^k + \sum\limits_{s=\ell+1}^{d} \sum\limits_{j= 1}^d C_{s j} x_s^kx_j^k.$$
   Then the expression of the above identity says that there is no monomial of the form $x_1^{n_1} x_2^{n_2+k} \cdots x_{\ell-1}^{n_{\ell-1}} x_\ell^{n_\ell+k} x_{\ell+1}^{n_{\ell+1}} \cdots x_d^{n_d}$ on the right side of this identity. So, this contradiction proves the claim. Applying the same methods on each monomials of $\phi_{\ell 2}$, one can write $Q_\ell = -C_{1\ell} x_1^k - C_{2 \ell} x_2^k + \sum\limits_{j=3}^d C^2_{\ell j}x_j^k$ for some $C^2_{\ell j} \in \RR[x_1, \ldots, x_d]$ with $C^2_{\ell \ell}=0$. Note that if $C'_{\ell 2}=0$ then $Q_\ell$ is in this form already, and $C^2_{\ell j}=C^1_{\ell j}$ for $j=3, \ldots, d$.
      
   Proceeding in a similar way, one can write $Q_{\ell}=-\sum\limits_{j=1}^{\ell -1}C_{j\ell}x_j^k + \sum\limits_{j=\ell+1}^n C^{\ell}_{\ell j}x_j^k$. Thus, there exists a matrix $(D_{ij})_{d \times d}$ for $(Q_1, \ldots, Q_d)$ such that $D_{ij} = - D_{ji}$ for $1 \leq i, j \leq \ell \leq d$. Therefore,  
 by induction, there exists a skew-symmetric matrix for $(Q_1, \ldots, Q_d)$.
\end{proof}

\begin{proof}[\textbf{Proof of  \Cref{thm:deg-n-vfld}}]

For each $i \in \{1, \ldots, n+1\}$, we write $P_i= P_i^{(m)}+ \cdots + P_i^{(1)} + P_i^{(0)}$ where $P_i^{(j)}$ is the degree $j$ homogeneous part of $P_i$ for $j =0, \ldots, m$. 
%Similarly, we write $Q =Q^{(3)}+Q^{(2)}+Q^{(1)}+P^{(0)}$ and $R = R^{(3)}+R^{(2)}+R^{(1)}+R^{(0)}$ also in this fashion.
Assume that $\chi=(P_1, \ldots, P_{n+1})$ is a vector field on $\SN^n$. Then, it must satisfy
    \begin{equation}\label{eq:n-deg}
    \sum_{i=1}^{n+1} P_ix_i =K (\sum_{k=1}^{n+1} x_k^2 -1).
\end{equation} for some $K\in \RR[x_1, \ldots, x_{n+1}]$ with $\deg(K)\leq m-1$.
Assume that $K^{(j)}$ is the degree $j$ homogeneous part of $K$. Then $K^{(0)}$ is $0$ since there is no constant term on the left side of \eqref{eq:n-deg}. Now, comparing the degree $(\ell+1)$ terms in \eqref{eq:n-deg}, we obtain 
    \begin{equation}\label{eq:deg-l-poly}
 \sum\limits_{i=1}^{n+1} P_i^{(\ell)}x_i=(\sum\limits_{k=1}^{n+1} x_k^2)K^{(\ell-1)}-K^{(\ell+1)}~;~ \mbox{for } 0\leq \ell\leq m
    \end{equation}
    where $K^{(-1)}=K^{(0)}=K^{(m)}=K^{(m+1)}=0$.
Note that if $m=1$ then $\sum\limits_{i=1}^{n+1}P_ix_i=0$. By Lemma \ref{lem:sum-pi-xi-0}, $P_i=\sum\limits_{j=1}^{n+1}A_{ij}x_j$ where $(A_{ij})$ is a constant skew-symmetric matrix. Next, we assume that $m\geq 2$.

    We claim that for $\ell\geq 1$,
\begin{dmath}\label{eq:p_2l}
        \sum\limits_{i=1}^{n+1} P_i^{(2\ell)}x_i=-(\sum\limits_{k=1}^{n+1} x_k^2)\sum\limits_{i=1}^{n+1} P_i^{(2\ell -2)}x_i-(\sum\limits_{k=1}^{n+1} x_k^2)^2\sum\limits_{i=1}^{n+1} P_i^{(2\ell -4)}x_i-\cdots-(\sum\limits_{k=1}^{n+1} x_k^2)^{\ell}\sum\limits_{i=1}^{n+1} P_i^{(0)}x_i-K^{(2\ell+1)}.
\end{dmath}
    
    We prove the claim inductively. Observe from \eqref{eq:deg-l-poly} that $$\sum\limits_{i=1}^{n+1} P_i^{(2)}x_i=(\sum\limits_{k=1}^{n+1} x_k^2)K^{(1)}-K^{(3)}~\mbox{and}~\sum\limits_{i=1}^{n+1} P_i^{(0)}x_i=-K^{(1)}.$$ Hence, $\sum\limits_{i=1}^{n+1} P_i^{(2)}x_i=-(\sum\limits_{k=1}^{n+1} x_k^2)\sum\limits_{i=1}^{n+1} P_i^{(0)}x_i-K^{(3)}$. So, for $\ell=1$, our claim is true.

    Assume that
    \begin{equation}\label{eq:assumption}
         \sum\limits_{i=1}^{n+1} P_i^{(2\ell-2)}x_i=-(\sum\limits_{k=1}^{n+1} x_k^2)\sum\limits_{i=1}^{n+1} P_i^{(2\ell -4)}x_i-\cdots-(\sum\limits_{k=1}^{n+1} x_k^2)^{\ell -1}\sum\limits_{i=1}^{n+1} P_i^{(0)}x_i-K^{(2\ell-1)}.
    \end{equation}
   We know from \eqref{eq:deg-l-poly} that $\sum\limits_{i=1}^{n+1} P_i^{(2\ell)}x_i=(\sum\limits_{k=1}^{n+1} x_k^2)K^{(2\ell-1)}-K^{(2\ell+1)}.$
So, replacing $K^{(2\ell-1)}$ in this equation from \eqref{eq:assumption}, we get \eqref{eq:p_2l}. Hence, the claim is proved.

Similarly, one can prove that, for $\ell\geq 1$,
\begin{dmath}\label{eq:p_2l-1}
 \sum\limits_{i=1}^{n+1} P_i^{(2\ell+1)}x_i=-(\sum\limits_{k=1}^{n+1} x_k^2)\sum\limits_{i=1}^{n+1} P_i^{(2\ell -1)}x_i-(\sum\limits_{k=1}^{n+1} x_k^2)^2\sum\limits_{i=1}^{n+1} P_i^{(2\ell -3)}x_i-\cdots-(\sum\limits_{k=1}^{n+1} x_k^2)^{\ell}\sum\limits_{i=1}^{n+1} P_i^{(1)}x_i-K^{(2\ell+2)}.
\end{dmath}
We define $d(m):=\begin{cases}
                \frac{m-1}{2}&~\mbox{if}~m~\mbox{is odd,}\\
                \frac{m}{2}&~\mbox{if}~m~\mbox{is even.}
            \end{cases}$
            
Then, using \eqref{eq:p_2l} and \eqref{eq:p_2l-1}, we get the following.
\begin{dmath}
    \sum\limits_{i=1}^{n+1} (P_i^{(m)}+P_i^{(m-1)})x_i=-\sum\limits_{j=1}^{d(m)}(\sum\limits_{k=1}^{n+1} x_k^2)^{j} \sum\limits_{i=1}^{n+1} P_i^{(m-2j)}x_i -\sum\limits_{j=1}^{d(m)} (\sum\limits_{k=1}^{n+1} x_k^2)^{j}\sum\limits_{i=1}^{n+1} P_i^{(m-2j-1)}x_i.
\end{dmath}
where $P^{(-1)}_i:=0$ for $1\leq i\leq n+1$.
Hence, by Lemma \eqref{lem:sum-pi-xi-0},
$$P_i^{(m)}+P_i^{(m-1)}=-\sum\limits_{j=1}^{d(m)}(\sum\limits_{k=1}^{n+1} x_k^2)^{j}  P_i^{(m-2j)} -\sum\limits_{j=1}^{d(m)}(\sum\limits_{k=1}^{n+1} x_k^2)^{j} P_i^{(m-2j-1)}+\sum\limits_{j=1}^{n+1} A_{ij}x_j$$
for each $i\in \{1,...,n+1\}$ where each $A_{ij}$ is a polynomial having monomials of degrees $m-1$ or $m-2$ such that $(A_{ij})$ is a skew-symmetric matrix. So,
\begin{equation*}
    \begin{split}
        P_i=&P_i^{(m)}+P_i^{(m-1)}+\cdots+P^{(0)}\\
        =&-\sum\limits_{j=1}^{d(m)} (\sum\limits_{k=1}^{n+1} x_k^2)^{j} P_i^{(m-2j)} -\sum\limits_{j=1}^{d(m)}(\sum\limits_{k=1}^{n+1} x_k^2)^{j} P_i^{(m-2j-1)}+\sum\limits_{j=1}^{n+1} A_{ij}x_j+\sum\limits_{j=0}^{m-2}P_i^{(j)}\\
        =&\sum\limits_{j=1}^{d(m)}(1-(\sum\limits_{k=1}^{n+1} x_k^2)^{j}))P_i^{(m-2j)}+\sum\limits_{j=1}^{d(m)}(1-(\sum\limits_{k=1}^{n+1} x_k^2)^j)P_i^{(m-2j-1)}+\sum\limits_{j=1}^{n+1} A_{ij}x_j\\
        =&\sum\limits_{j=1}^{d(m)}(1-(\sum\limits_{k=1}^{n+1} x_k^2)^j)(P_i^{(m-2j)}+P_i^{(m-2j-1)})+\sum\limits_{j=1}^{n+1} A_{ij}x_j
    \end{split}
\end{equation*}
Assuming $f_{ij}:=P_i^{(m-2j)}+P_i^{(m-2j-1)}$, we obtain $P_i=\sum\limits_{j=1}^{d(m)}(1-(\sum\limits_{k=1}^{n+1} x_k^2)^j)f_{ij}+\sum\limits_{j=1}^{n+1} A_{ij}x_j$. Observe that $f_{ij}$ is a polynomial having monomials of degrees $m-2j$ or $m-2j-1$. 

Suppose that $P_i$ is given by \eqref{eq:deg-n-form} for $i=1,...,n+1$. Then $P_1, \ldots, P_{n+1}$ satisfy \eqref{eq:n-deg} for some polynomial $K$. Hence, the converse part is also true.
\end{proof}

\begin{remark}
For $i \in \{1, \ldots, n+1\}$, the polynomial $P_i$ in \Cref{thm:deg-n-vfld} can also be written as follows.
    \begin{equation}\label{eq:deg-n-form2}
   P_i=(1-\sum\limits_{k=1}^{n+1} x_k^2) f_{i} +\sum\limits_{j=1}^{n+1} A_{ij}x_j,
    \end{equation}
    for some polynomial $f_i \in \RR[x_1, \ldots, x_{n+1}]$ of degree $\leq m-2$. 
\end{remark}

\begin{theorem}\label{thm:linear-fi}
 Let  $f\in \RR[x_1,\ldots,x_{n+1}]$ be a linear polynomial. Then, $f$ is a first integral of some vector fields on $\SN^n$.
\end{theorem}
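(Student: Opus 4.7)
The plan is to produce an explicit degree one vector field on $\SN^n$ admitting $f$ as a first integral. Write $f = c_0 + \sum_{i=1}^{n+1} c_i x_i$ and set $c := (c_1, \ldots, c_{n+1})^t$. If $c = 0$ then $f$ is constant and hence not a first integral by definition, so we may assume $c \neq 0$.

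Next I would exploit the $SO(n+1)$-invariance of the problem: a linear orthogonal change of coordinates preserves $\SN^n$, preserves $\sum_k x_k^2$ and hence the form \eqref{eq:deg-n-form} (since skew-symmetric matrices are conjugated to skew-symmetric matrices via $A \mapsto RAR^t$), and preserves the relation $\chi(f) = 0$. So after a suitable rotation we may assume $c = \alpha e_1$ with $\alpha > 0$, and $f = c_0 + \alpha x_1$.

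By \Cref{thm:deg-n-vfld}, every degree one vector field $\chi$ on $\SN^n$ has components $P_i = \sum_j A_{ij}x_j$ for a constant skew-symmetric matrix $A$, so $\chi(f) = \alpha P_1 = \alpha\sum_j A_{1j}x_j$, which vanishes identically whenever the first row of $A$ is zero (the first column then also vanishes by skew-symmetry). For $n \geq 2$ one can pick $A$ to have only the nonzero entries $A_{23} = 1$ and $A_{32} = -1$, giving $P_1 = 0$, $P_2 = x_3$, $P_3 = -x_2$, $P_i = 0$ for $i \geq 4$; this is a nontrivial polynomial vector field on $\SN^n$ satisfying $\chi(f) \equiv 0$, establishing that $f$ is a first integral.

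I do not foresee any serious obstacle; the orthogonal reduction is routine and the construction is completely explicit. The only subtle point is the degenerate case $n = 1$, where the first-row-zero constraint forces $A = 0$; in that situation I would instead use a higher degree vector field, for example $P_1 = (x_1^2 + x_2^2 - 1)x_2$ and $P_2 = -(x_1^2 + x_2^2 - 1)x_1$, using the freedom in the $f_{ij}$ terms of \eqref{eq:deg-n-form}, whose action on $f$ is divisible by $x_1^2 + x_2^2 - 1$ and hence vanishes on $\SN^1$.
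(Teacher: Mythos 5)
Your proof is correct, and it reaches the same essential target as the paper -- by \Cref{thm:deg-n-vfld} a degree one vector field on $\SN^n$ is $\chi = Ax$ with $A$ skew-symmetric, and $f=c_0+\sum c_ix_i$ is a first integral precisely when $c^tA=0$ -- but you get there by a different and cleaner route. The paper constructs such an $A$ in the original coordinates: starting from an arbitrary skew-symmetric $B$ (with polynomial entries, so the construction yields vector fields of every degree), it solves for the $k$-th row of $A$ so that $c$ annihilates all columns except possibly the $k$-th, and then needs a separate computation to check $\sum_j c_jA_{jk}=0$. Your orthogonal normalization $c=\alpha e_1$ makes that entire verification unnecessary: once the gradient direction is a coordinate axis, "first row and column of $A$ vanish" is visibly sufficient, and the explicit choice $A_{23}=-A_{32}=1$ gives a nonzero field with $\chi(f)\equiv 0$. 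The trade-off is that your argument needs the (routine but worth stating, as you do) equivariance of the form \eqref{eq:deg-n-form} and of the relation $\chi(f)=0$ under $SO(n+1)$. One further point in your favour: you flag the case $n=1$, where any $2\times 2$ skew-symmetric $A$ with $c^tA=0$ and $c\neq 0$ is forced to vanish, so the paper's own construction cannot produce a nonzero field there either; your fallback field $\bigl((x_1^2+x_2^2-1)x_2,\,-(x_1^2+x_2^2-1)x_1\bigr)$ satisfies $\chi(f)=0$ on $\SN^1$, though you should note it restricts to the zero field on the circle, so $f$ is a first integral only in the degenerate sense that every trajectory is a point -- a degeneracy that is unavoidable for $n=1$, since a nonvanishing field on $\SN^1$ has an arc as a trajectory and no nonconstant linear function is constant on an arc.
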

\begin{proof}
   Assume that $f:=\sum\limits_{i=1}^{n+1}a_ix_i+c$ where $a_i,c\in \RR$. We construct an $(n+1)\times (n+1)$ skew-symmetric matrix $A=(A_{ij})$ such that $a^tA=0$ and $Ax\neq 0$ where $A_{ij}\in \RR[x_1,\ldots,x_{n+1}]$,  $a^t=(a_1\ldots a_{n+1})$ and $x^t=(x_1\ldots x_{n+1})$ are $1 \times n+1$ matrices. Then, observe that $\chi=(P_1,\ldots,P_{n+1})$ with $P_i=\sum\limits_{i=1}^{n+1}A_{ij}x_j$ is a non-zero vector field on $\SN^n$ such that $f$ is a first integral of $\chi$.

    Suppose $a_k\neq 0$ for some $k\in \{1,\ldots,n+1\}$. Choose an $(n+1)\times (n+1)$ skew-symmetric matrix $B=(B_{ij})$ where $B_{ij}\in \RR[x_1,\ldots,x_{n+1}]$. We define, $$A_{kj}:=-\frac{1}{a_k}\Big(\sum\limits_{i=1,i\neq k}^{n+1}a_iB_{ij}\Big),~A_{jk}:=-A_{kj}~\mbox{for}~j\in \{1,\ldots,k-1,k+1,\ldots,n+1\}.$$
    Also, we define, $A_{pq}:=B_{pq}$ if $p,q\in \{1,\ldots,k-1,k+1,\ldots,n+1\}$ and $A_{kk}=0$. Notice that $A$ is a skew-symmetric matrix. Since $B$ is an arbitrary skew-symmetric matrix, we can assume that $Ax\neq 0$. Observe that $a$ is orthogonal to the $j$-th column of $A$ for $j\neq k$. We show that $a$ is orthogonal to the $k$-th column of $A$ also.
    \begin{equation*}
    \begin{split}
        \sum\limits_{j=1}^{n+1} a_jA_{jk}=-\sum\limits_{j=1,j\neq k}^{n+1} a_jA_{kj}&=\sum\limits_{j=1,j\neq k}^{n+1}\frac{a_j}{a_k}\Big(\sum\limits_{i=1,i\neq k}^{n+1}a_iB_{ij}\Big)\\
        &=\sum\limits_{i=1,i\neq k}^{n+1} \frac{a_i}{a_k}\sum\limits_{j=1,j\neq k}^{n+1}a_jB_{ij}\\
        &=-\sum\limits_{i=1,i\neq k}^{n+1} \frac{a_i}{a_k}\sum\limits_{j=1,j\neq k}^{n+1}a_jB_{ji}\\
        &=-\sum\limits_{i=1,i\neq k}^{n+1} \frac{a_i}{a_k}\sum\limits_{j=1,j\neq k}^{n+1}a_jA_{ji}\\
        &=-\sum\limits_{i=1,i\neq k}^{n+1} \frac{a_i}{a_k}(-a_kA_{ki})\\
        &=-\sum\limits_{i=1}^{n+1}a_iA_{ik}.
    \end{split}
    \end{equation*}
    Hence, $\sum\limits_{j=1}^{n+1}a_jA_{jk}=0$. This completes the proof.
\end{proof}

\begin{remark}
If $n$ is even and $\chi$ is a degree one vector field on $\SN^n$ then $\chi$ has a linear polynomial first integral because the real skew-symmetric matrix of odd order has a real eigenvector corresponding to the eigenvalue 0. 
\end{remark}

\begin{theorem}\label{thm:cmplt-int-vfld}
There exist completely integrable degree $m$ vector fields on $\SN^n$ for each $m,n\in \NN$.
\end{theorem}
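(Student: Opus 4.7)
The plan is to give an explicit construction that works uniformly in $m$ and $n$. For every $m, n \in \NN$, I would take
\[
\chi \;:=\; x_1^{m-1}\bigl(-x_2\,\partial_{x_1} + x_1\,\partial_{x_2}\bigr),
\]
a ``rotation in the $(x_1,x_2)$-plane'' scaled by $x_1^{m-1}$. The first two components are $\mp x_1^{m-1}x_j$, each of degree exactly $m$, while the remaining $n-1$ components vanish identically, so $\chi$ is a polynomial vector field of degree exactly $m$.

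The first step I would carry out is to verify that $\chi$ is tangent to $\SN^n$. A direct computation gives
\[
\chi\Bigl(\sum_{k=1}^{n+1} x_k^2 - 1\Bigr) \;=\; 2x_1^{m-1}\bigl(-x_1 x_2 + x_2 x_1\bigr) \;=\; 0,
\]
so the cofactor of the defining polynomial $\sum x_k^2 - 1$ is zero; hence $\chi$ is a polynomial vector field on $\SN^n$, and it is visibly of the form prescribed by \Cref{thm:deg-n-vfld} (with the skew-symmetric matrix supported in the top-left $2\times 2$ block when $m=1$, and otherwise absorbed into the $f_{ij}$ terms).

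Next, I would exhibit $n$ functionally independent first integrals on an open subset of $\RR^{n+1}$, as required by the definition of complete integrability (with $d=n+1$). Since the components of $\chi$ with index $\geq 3$ vanish, the coordinate functions $x_3,\ldots,x_{n+1}$ are first integrals, giving $n-1$ of them. The computation above simultaneously identifies $x_1^2 + x_2^2$ as an $n$-th first integral. On the open set $U := \{(x_1,x_2)\neq (0,0)\}\subset \RR^{n+1}$, the Jacobian of the tuple $(x_1^2 + x_2^2,\, x_3,\ldots,x_{n+1})$ has rank $n$, so these first integrals are functionally independent on $U$, showing that $\chi$ is completely integrable.

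No step here is a genuine obstacle; the only point that requires any care is ensuring that $\chi$ has degree exactly $m$, which is guaranteed by the factor $x_1^{m-1}$ being of degree exactly $m-1$. The edge case $n = 1$ requires only a single first integral (since $d-1 = 1$), and the same construction provides $x_1^2 + x_2^2$.
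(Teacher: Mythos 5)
Your construction is the paper's own example with the degree-$(m-1)$ polynomial specialized to $A=-x_1^{m-1}$, and your first integrals $x_1^2+x_2^2,\;x_3,\ldots,x_{n+1}$ are functionally equivalent to the paper's choice $\sum_{i}x_i^2-1,\;x_3,\ldots,x_{n+1}$; the argument is correct and essentially identical, with your explicit Jacobian check of independence on $\{(x_1,x_2)\neq(0,0)\}$ being a small (welcome) extra verification.
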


\begin{proof}%[\textbf{Proof of \Cref{thm:cmplt-int-vfld}}]
If $A$ is a degree $m-1$ polynomial and $P_1=Ax_2, P_2=-Ax_1, P_3 =0, \ldots, P_{n+1}=0$, then $\chi_n=(P_1, \ldots, P_{n+1})$ is a vector field on $\SN^n$ of degree $m$. Note that $f_1:=\sum\limits_{i=1}^{n+1}x_i^2-1$ is a first integral of $\chi_n$. Also, $f_j:=x_j$ is a first integral of $\chi_n$ for $j=3,\ldots,n+1$. Hence, $\chi_n$ has $n$ independent first integrals, which makes it completely integrable.
\end{proof}

Next, we discuss some necessary conditions for the existence of invariant great $(n-1)$-spheres of a polynomial vector field on $\SN^n$. Note that a vector field on $\SN^n$ is invariant by $SO(n+1)$. Hence, if a polynomial vector field on $\SN^n$ has an invariant great $(n-1)$-sphere, then we can assume the sphere to be $\{x_{n+1}=0\}\cap \SN^n$.

\begin{theorem}\label{thm:inv-grt-sphere-upto-2}
    Let $\chi=(P_1,...,P_{n+1})$ be a degree $m$ vector field on $\SN^n$. Suppose $\chi$ has the invariant great $(n-1)$-sphere $\{x_{n+1}=0\}\cap \SN^n$. The following hold.
    \begin{enumerate}
        \item If $m=1$ then $P_{n+1}=0$.
        \item If $m=2$ then $P_{n+1}=(1-\sum\limits_{k=1}^{n+1} x_k^2) \alpha+\sum\limits_{j=1}^n A_{(n+1)j}x_j$ where $\alpha$ is a real number and $A_{(n+1)j}=c_1^jx_1+\cdots+c_{n+1}^jx_{n+1}$ so that $C=(C_{ij})_{(n+1)\times (n+1)}=(c_i^j)$ is a constant skew-symmetric matrix.
    \end{enumerate}
\end{theorem}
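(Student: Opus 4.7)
The plan is to invoke Theorem \ref{thm:deg-n-vfld} (and the simpler form in the remark immediately after it) to normalize $P_{n+1}$, and then impose the invariance of $\{x_{n+1}=0\}\cap\SN^n$. Since the vector field is tangent to $\SN^n$ automatically, tangency to this great $(n-1)$-sphere is equivalent to $P_{n+1}$ vanishing on $\{x_{n+1}=0\}\cap\SN^n$, which, by a standard real Nullstellensatz argument (using the irreducibility of $\sum_{k=1}^{n+1}x_k^2-1$ and the Zariski density of the real sphere in its complex zero set), translates to
\[
P_{n+1}\in\bigl(x_{n+1},\;\textstyle\sum_{k=1}^{n+1}x_k^2-1\bigr)\subset\RR[x_1,\ldots,x_{n+1}].
\]

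For part (1), Theorem \ref{thm:deg-n-vfld} with $m=1$ forces $P_{n+1}=\sum_{j=1}^{n+1}A_{(n+1)j}x_j$ with each $A_{(n+1)j}\in\RR$ and $A_{(n+1)(n+1)}=0$ by the skew-symmetry of $(A_{ij})$. Setting $x_{n+1}=0$ in the ideal-membership condition reduces matters to requiring the linear form $\sum_{j=1}^{n}A_{(n+1)j}x_j$ to vanish on the unit $(n-1)$-sphere of $\RR^n$. As $n\ge 2$, this sphere spans $\RR^n$, so the linear form must be identically zero; hence $A_{(n+1)j}=0$ for $j\le n$ and $P_{n+1}=0$.

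For part (2), the remark after Theorem \ref{thm:deg-n-vfld} gives $P_{n+1}=(1-\sum x_k^2)\alpha+\sum_{j=1}^{n+1}A_{(n+1)j}x_j$ with $\alpha\in\RR$, $\deg A_{(n+1)j}\le 1$, and $(A_{ij})$ skew-symmetric. Skew-symmetry makes $A_{(n+1)(n+1)}=0$, which kills the $j=n+1$ summand and forces the last column of the coefficient matrix $C$ to vanish. The Darboux-factor piece $-\alpha(\sum x_k^2-1)$ already lies in the invariance ideal. Writing $A_{(n+1)j}=d^j+\sum_{i=1}^{n+1}c_i^j x_i$ for $j\le n$ and setting $x_{n+1}=0$ in the remainder, the ideal condition becomes
\[
\sum_{j=1}^{n}d^j x_j+\sum_{i,j=1}^{n}c_i^j x_i x_j\;=\;q\Bigl(\sum_{k=1}^{n}x_k^2-1\Bigr)
\]
in $\RR[x_1,\ldots,x_n]$ for some $q$; by degree, $q$ is a constant, and matching the constant, linear, and quadratic parts of the identity in turn yields $q=0$, $d^j=0$ for $1\le j\le n$, and $c_i^j+c_j^i=0$ for $1\le i,j\le n$. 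Combined with the vanishing of the last column of $C$, this delivers the stated form of $P_{n+1}$ together with the claimed skew-symmetric structure of $C$.

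The only non-bookkeeping step is the ideal-theoretic claim that a polynomial in $\RR[x_1,\ldots,x_n]$ vanishing on the unit sphere must be a multiple of $\sum_{k=1}^{n}x_k^2-1$; this follows from the irreducibility of that polynomial over $\RR$ and the Zariski density of the real sphere in its complex zero set. The rest of the argument is direct coefficient comparison.
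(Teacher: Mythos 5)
Your proof is correct in substance but takes a genuinely different route from the paper. The paper passes through the stereographic projection from the north pole: the great sphere $\{x_{n+1}=0\}\cap\SN^n$ becomes $\SN^{n-1}\subset\RR^n$, invariance becomes the condition that $|u|^2-1$ divides $\Tilde{P}_{n+1}$, and the coefficient conditions are read off from that divisibility. You instead stay in $\RR^{n+1}$ and encode invariance as the ideal membership $P_{n+1}\in(x_{n+1},\,\sum_k x_k^2-1)$, justified by irreducibility of $\sum_{k=1}^n x_k^2-1$ and Zariski density of the real sphere (valid since $n\ge 2$). This is cleaner in that it avoids the change of variables and the bookkeeping with $\Tilde{P}_{n+1}$, at the cost of invoking a (standard) real-algebraic density fact; the paper's route is more computational but self-contained given its Section 2 setup. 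Your reductions for $m=1$ (a linear form vanishing on a sphere that spans $\RR^n$ must vanish) and for $m=2$ (degree count forces $q$ constant, then $q=0$, $d^j=0$, $c_i^j+c_j^i=0$ for $i,j\le n$) are sound and parallel exactly what the paper's divisibility computation yields.

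One caveat about your closing sentence: what your argument (and, for that matter, the paper's) actually establishes is skew-symmetry of the $n\times n$ block $(c_i^j)_{1\le i,j\le n}$ together with $c_0^j=0$; the coefficients $c_{n+1}^j$ of $x_{n+1}$ in $A_{(n+1)j}$ are \emph{not} constrained, since they disappear upon setting $x_{n+1}=0$. So "the claimed skew-symmetric structure of $C$" as an $(n+1)\times(n+1)$ matrix does not follow from "the vanishing of the last column" plus the $n\times n$ skew-symmetry --- full skew-symmetry would also force the last row $c_{n+1}^j$ to vanish, which is false in general (e.g.\ on $\SN^2$ take $A_{31}=cx_3=-A_{13}$ and all other data zero, so $P_3=cx_1x_3$ and $\{x_3=0\}\cap\SN^2$ is invariant with cofactor $cx_1$, yet $c_3^1=c\neq0$). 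This is really an imprecision in the theorem's statement (its own proof only proves the $n\times n$ claim, via the matrix $u^tCu=0$ with $u\in\RR^n$), but you should state your conclusion as the $n\times n$ skew-symmetry rather than asserting the full $(n+1)\times(n+1)$ version.
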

\begin{proof}
    As $\chi=(P_1,...,P_{n+1})$ is a vector field on $\SN^n$, we have $$\sum\limits_{i=1}^{n+1} P_i x_i=0~ \mbox{for all}~ (x_1,\ldots,x_{n+1})\in \SN^n.$$
    Hence, after the stereographic projection, we get
    $$2\sum\limits_{i=1}^n \Tilde{P}_i u_i+(|u|^2-1)\Tilde{P}_{n+1}=0,~\forall ~(u_1,\ldots,u_n)\in \RR^n.$$
    So,
    \begin{equation}\label{eq:sum-u-ui}
        \sum\limits_{i=1}^n \dot{u}_i u_i=\sum\limits_{i=1}^n (\Tilde{P}_i+u_i\Tilde{P}_{n+1}) u_i=\frac{1-|u|^2}{2}\Tilde{P}_{n+1}+|u|^2\Tilde{P}_{n+1}=\frac{1+|u|^2}{2}\Tilde{P}_{n+1}.
    \end{equation}
   The intersection $\{x_{n+1}=0\}\cap \SN^n$ reduces to $\SN^{n-1}$ in $\RR^n$ under the stereographic projection. The sphere $\SN^{n-1}$ is invariant for the vector field $\chi'=(R_1,\ldots,R_n)$ if and only if $|u|^2-1$ divides $\sum\limits_{i=1}^n \dot{u}_i u_i$. By \eqref{eq:sum-u-ui}, $\SN^{n-1}$ is invariant if and only if $|u|^2-1$ divides $\Tilde{P}_{n+1}.$ 

\noindent\textbf{\underline{For (i)}:} If $m=1$, suppose $P_{n+1}=\sum\limits_{j=1}^n A_{(n+1)j}x_j$ where $A_{(n+1)j}$ are constants. Then $\Tilde{P}_{n+1}=2\sum\limits_{i=1}^n A_{(n+1)j}u_j$. Notice that $|u|^2-1$ divides $\Tilde{P}_{n+1}$ if and only if $A_{(n+1)j}=0$ for $j=1,\ldots,n$. Consequently, $\SN^{n-1}$ is invariant if and only if $P_{n+1}=0$. Hence, the first part is proved. 

\noindent\textbf{\underline{For (ii)}:} If $m=2$, suppose $P_{n+1}=(1-\sum\limits_{k=1}^{n+1} x_k^2) \alpha +\sum\limits_{j=1}^n A_{(n+1)j}x_j$ where $A_{(n+1)j}$ are degree one polynomials. We get that $\Tilde{P}_{n+1}=2\sum\limits_{j=1}^n \Tilde{A}_{(n+1)j}u_j$ where $$\Tilde{A}_{(n+1)j}=(|u|^2+1)A_{(n+1)j}\Big(\frac{2u_1}{|u|^2+1},...,\frac{2u_n}{|u|^2+1},\frac{|u|^2-1}{|u|^2+1}\Big).$$ Assume that
    $$A_{(n+1)j}=c_1^jx_1+\cdots+c_{n+1}^jx_{n+1}+c_0^j;~c_i^j\in \RR.$$
    Then $\Tilde{P}_{n+1}|_{|u|^2=1}=2\sum\limits_{j=1}^n(c_1^ju_1+\cdots+c_n^ju_n+2c_0^j)u_j$. Note that $|u|^2-1$ divides $\Tilde{P}_{n+1}$ if and only if $|u|^2-1$ divides $\Tilde{P}_{n+1}|_{|u|^2=1}$. If $|u|^2-1$ divides $\Tilde{P}_{n+1}|_{|u|^2=1}$ then $c_0^j=0$ for $j=1,\ldots,n$ and 
    \begin{equation}\label{eq:skew-symm}
        \sum\limits_{j=1}^n(c_1^ju_1+\cdots+c_n^ju_n)u_j=0.
    \end{equation}
  The equation \eqref{eq:skew-symm} is equivalent to $u^tCu=0$ where $C_{ij}=c_i^j$ and $u^t=\begin{pmatrix}
        u_1&\cdots&u_n
    \end{pmatrix}$. Since $u$ is arbitrary, $C$ is skew-symmetric, see \cite[Chapter I]{elman2008algebraic}.  
\end{proof}

\begin{theorem}\label{thm:hamiltonian-necessary}
Let $\chi=(P_1, \ldots, P_{2n})$ be a vector field on $\SN^{2n-1}$ of degree $m + 2 (>3)$ where $P_i$'s are given by \eqref{eq:deg-n-form2} with $f_i = \sum\limits_{k=1}^{2n} a_{ik} x_k^m +g_i$ for some $g_i \in \RR[x_1, \ldots, x_{2n}]$. If $\chi$ is Hamiltonian then $\deg{g_i} > m-1$ or $ \deg{A_{ij}} > m$ for $i, j \in \{1, \ldots, 2n\}$.
\end{theorem}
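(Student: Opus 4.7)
I would argue by contradiction: assume $\chi$ is Hamiltonian while simultaneously $\deg g_i \le m-1$ for every $i$ and $\deg A_{ij} \le m$ for every $i,j$. The goal is to show that every $a_{ik}$ must vanish, which forces $\deg \chi \le m+1$ and contradicts the hypothesis $\deg \chi = m+2$.

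The first step is to isolate the degree $m+2$ homogeneous part of each $P_i$. Set $L_i := \sum_{k=1}^{2n} a_{ik} x_k^m$. Under the assumed degree bounds, both $(1-\sum_k x_k^2)g_i$ and $\sum_j A_{ij}x_j$ have degree at most $m+1$, so the only contribution to the top degree is $P_i^{(m+2)} = -\bigl(\sum_k x_k^2\bigr) L_i$. Next, I would use the (weak) necessary condition that any Hamiltonian vector field satisfies $\partial P_{2i-1}/\partial x_{2i-1} + \partial P_{2i}/\partial x_{2i} = 0$ for each $i$, which follows at once from equality of the mixed second partials $\partial^2 H/(\partial x_{2i-1}\partial x_{2i})$. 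Taking the degree $m+1$ homogeneous part of this identity and using $\partial L_j/\partial x_k = m\,a_{jk}\,x_k^{m-1}$ yields, for each $i \in \{1,\dots,n\}$,
\begin{equation*}
2 x_{2i-1} L_{2i-1} + 2 x_{2i} L_{2i} + m \Bigl(\sum_{k=1}^{2n} x_k^2\Bigr)\bigl(a_{(2i-1)(2i-1)} x_{2i-1}^{m-1} + a_{(2i)(2i)} x_{2i}^{m-1}\bigr) = 0.
\end{equation*}

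The crux of the argument, which I expect to be the main obstacle, is the monomial-coefficient analysis of this single polynomial identity. Comparing coefficients of $x_{2i-1}^{m+1}$ and of $x_{2i}^{m+1}$ gives $(m+2)\,a_{(2i-1)(2i-1)} = 0$ and $(m+2)\,a_{(2i)(2i)} = 0$, so both of these constants vanish. This eliminates the terms containing the factor $\sum_k x_k^2$, and the identity reduces to $x_{2i-1} L_{2i-1} + x_{2i} L_{2i} = 0$. Because each $L_j$ is a pure sum of $m$-th powers and $m \ge 2$, the monomials $x_{2i-1} x_k^m$ (varying $k$) and $x_{2i} x_k^m$ (varying $k$) are pairwise distinct in $\RR[x_1,\dots,x_{2n}]$---the only potential collision, between $x_{2i-1} x_{2i}^m$ and $x_{2i} x_{2i-1}^m$, requires $m=1$ and is therefore excluded. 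Consequently each coefficient vanishes separately, giving $a_{(2i-1)k} = 0$ and $a_{(2i)k} = 0$ for every $k$. Letting $i$ range over $\{1,\dots,n\}$, the odd indices $2i-1$ and even indices $2i$ together exhaust all rows of $(a_{ik})$, so every $a_{ik}$ is zero. Hence $L_i \equiv 0$ and $P_i^{(m+2)} \equiv 0$ for every $i$, producing the desired contradiction with $\deg \chi = m+2$.
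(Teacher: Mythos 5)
Your argument is correct. It rests on the same underlying fact as the paper's proof (the existence of a smooth $H$ with $P_{2i-1}=-\partial H/\partial x_{2i}$, $P_{2i}=\partial H/\partial x_{2i-1}$), but you exploit it in the dual way: the paper \emph{integrates}, writing down two antiderivative expressions for $H$ from $P_1$ and $P_2$ and comparing their degree $m+3$ coefficients to force $a_{1k}=0$, whereas you \emph{differentiate}, invoking the closedness condition $\partial P_{2i-1}/\partial x_{2i-1}+\partial P_{2i}/\partial x_{2i}=0$ and comparing degree $m+1$ coefficients. Your identification of the top homogeneous part $P_i^{(m+2)}=-\bigl(\sum_k x_k^2\bigr)L_i$ under the assumed degree bounds is exactly right, the coefficient extraction $(m+2)a_{(2i-1)(2i-1)}=0$ is correct, and the monomial-distinctness argument for $x_{2i-1}L_{2i-1}+x_{2i}L_{2i}=0$ legitimately uses $m\geq 2$ (which the hypothesis $m+2>3$ guarantees) to rule out the single possible collision $x_{2i-1}x_{2i}^m=x_{2i}x_{2i-1}^m$. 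What your route buys is twofold: it avoids bookkeeping the arbitrary functions of integration ($\phi_\ell$, $\eta_\ell$ in the paper), and by running over every conjugate pair $i\in\{1,\dots,n\}$ it kills \emph{all} rows of $(a_{ik})$ rather than only the row of the top-degree component, so you need no preliminary reordering $\deg P_1\geq\cdots\geq\deg P_{2n}$. The paper's integration route, on the other hand, displays the candidate Hamiltonian explicitly, which is mildly more informative but logically no stronger here. Both proofs are valid; yours is the cleaner one.
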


\begin{proof}
We may assume that $m+2=\deg P_1 \geq \cdots \geq \deg P_{2n}$. By \eqref{eq:deg-n-form2} we have $$P_i=\bigg(1 - \sum_{j=1}^{2n} x_j^2\bigg) \bigg(\sum_{k=1}^{2n} a_{ik} x_k^m +g_i \bigg) + \sum_{j=1}^{2n} A_{ij}x_j$$
for $i=1, \ldots, 2n$. Suppose $\deg{g_i} \leq m-1$ and $ \deg{A_{ij}}\leq m$ for $i, j \in \{1, \ldots, 2n\}$. Let $H$ be a Hamiltonian for $\chi$. So, $\frac{\partial H}{\partial x_2} = -P_1$ and $\frac{\partial H}{\partial x_1} = P_2$. Thus, we have the following.

\begin{dmath}
 -H =a_{12} \bigg(\frac{ x_2^{m+1}}{m+1} - \frac{x_1^2 x_2^{m+1}}{m+1} - \frac{ x_2^{m+3}}{m+3} - \sum_{j=3}^{2n} \frac{x_j^2 x_2^{m+1}}{m+1} \bigg) + \bigg(1- \sum_{j=1, j\neq 2}^{2n}x_j^2 \bigg) \bigg( \sum_{k=1, k\neq 2}^{2n} a_{1k}x_k^mx_2 \bigg) - \bigg( \sum_{k=1, k \neq 2}^{2n}a_{1k} x_k^m \bigg) \frac{x_2^3}{3} + \phi_1(x_1, \ldots, x_{2n}) + \eta_2(x_1, x_3, \ldots, x_{2n}).
\end{dmath}
and 
\begin{dmath}
 H =a_{21} \bigg( \frac{ x_1^{m+1}}{m+1} - \frac{ x_1^{m+3}}{m+3} - \sum_{j=2}^{2n} \frac{x_j^2 x_1^{m+1}}{m+1} \bigg) + \bigg(1 - \sum_{j=2}^{2n}x_j^2 \bigg) \bigg( \sum_{k= 2}^{2n} a_{2k}x_k^mx_1 \bigg) - \bigg( \sum_{k= 2}^{2n}a_{2k} x_k^m \bigg) \frac{x_1^3}{3} + \phi_2(x_1, \ldots, x_{2n})+ \eta_1(x_2, x_3, \ldots, x_{2n}),
\end{dmath}
for some $\phi_1, \phi_2, \eta_1, \eta_2 \in \RR[x_1, \ldots, x_{2n}]$ with $\deg \phi_1, \deg \phi_2 \leq m+2$ and $\deg \eta_1, \deg \eta_2 \leq m+3$. Observe that $\eta_\ell$ is independent of $x_\ell$ for $\ell=1, 2.$ Comparing the coefficients of degree $m+3$ terms, one gets that $a_{1k}=0$ for $k=1,\ldots,2n$ since $m>1$ . So, we obtain $\deg P_1 < m+2$. Thus, we arrived at a contradiction. 
\end{proof}

\begin{proof}[\textbf{Proof of \Cref{thm:deg-one-hamiltonian}}]
    Suppose $H$ is a Hamiltonian of the degree one vector field $\chi$ on $\SN^{2n-1}$. Then, $H$ is a quadratic homogeneous polynomial in $\RR[x_1,\ldots,x_{2n}]$. So, $H=x^tMx$ where $x^t=\begin{pmatrix}
        x_1 &\ldots &x_{2n}
    \end{pmatrix}$ and $M=(m_{ij})_{2n\times 2n}$ is a symmetric matrix. Note that $\frac{\partial H}{\partial x_k}=e_k^tMx+x^tMe_k=2e_k^tMx$. So,
    $$P_{2i}=\frac{\partial H}{\partial x_{2i-1}}=2e_{2i-1}^tMx~\mbox{and}~P_{2i-1}=-\frac{\partial H}{\partial x_{2i}}=-2e_{2i}^tMx$$
    for each $i\in \{1,\ldots,n\}$. As $P_j=\sum\limits_{k=1}^{2n}a_{jk}x_k$, we obtain $a_{2i,j}=2m_{2i-1,j}$ and $a_{2i-1,j}=-2m_{2i,j}$ for each $i\in \{1,\dots,n\}$ and $j\in \{1,\ldots,2n\}$. Since $M$ is symmetric, $B=2M$ is also a symmetric matrix.

    Conversely, suppose that $B=(b_{ij})$ defined as in the statement is a symmetric matrix. Then, $H=\frac{1}{2}x^tBx$ where $x^t=\begin{pmatrix}
        x_1 &\ldots &x_{2n}\end{pmatrix}$ is a Hamiltonian for $\chi$. 
\end{proof}

\begin{proposition}\label{thm_hamil_d12}
 If $\chi$ is a degree two Hamiltonian vector field on $\SN^3$ then $\chi$ is non-homogeneous. 
\end{proposition}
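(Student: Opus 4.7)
The plan is to proceed by contradiction. Suppose $\chi = (P_1, P_2, P_3, P_4)$ is a homogeneous vector field of degree $2$ on $\SN^3$ admitting a Hamiltonian $H\colon \RR^4 \to \RR$. Since each $P_i$ is homogeneous of degree $2$ and every partial derivative of $H$ equals $\pm P_j$ for some $j$, each $H_{x_i}$ is homogeneous of degree $2$. Decomposing $H = \sum_d H^{(d)}$ into its homogeneous parts, I would conclude that $(H^{(d)})_{x_i} = 0$ for every $d \ne 3$ and every $i$, whence $H^{(d)}$ is constant for $d \ne 3$; only $d=0$ survives, and the constant contributes nothing to $\chi$, so I may assume $H$ is itself homogeneous of degree $3$.

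Next, I would exploit the sphere constraint. The defining identity $\sum_{i=1}^{4} P_i x_i = K\bigl(\sum_k x_k^2 - 1\bigr)$ must hold for some polynomial $K$, and the left side is homogeneous of degree $3$. A homogeneous polynomial of positive degree that vanishes on $\SN^3$ vanishes identically (apply $f(x) = |x|^d f(x/|x|)$ to any nonzero $x$), so $\sum_i P_i x_i \equiv 0$. Substituting $P_1 = -H_{x_2}$, $P_2 = H_{x_1}$, $P_3 = -H_{x_4}$, $P_4 = H_{x_3}$, this rewrites as
\begin{equation*}
(x_2 \partial_{x_1} - x_1 \partial_{x_2}) H + (x_4 \partial_{x_3} - x_3 \partial_{x_4}) H = 0,
\end{equation*}
i.e., $H$ lies in the kernel of the infinitesimal generator $L$ of the diagonal circle action that rotates the $(x_1, x_2)$- and $(x_3, x_4)$-planes simultaneously.

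To finish, I would pass to complex coordinates $z_1 = x_1 + i x_2$, $z_2 = x_3 + i x_4$, in which $L$ acts diagonally on monomials by $L(z_1^a \bar z_1^b z_2^c \bar z_2^d) = -i(a - b + c - d)\, z_1^a \bar z_1^b z_2^c \bar z_2^d$. A degree-$3$ monomial in $\ker L$ would satisfy both $a + b + c + d = 3$ and $a - b + c - d = 0$; summing these gives $2(a + c) = 3$, which has no solution in nonnegative integers. Hence every monomial of $H$ vanishes, so $H \equiv 0$ and therefore $\chi \equiv 0$, contradicting $\deg \chi = 2$.

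The main obstacle is keeping the Hamiltonian sign conventions and the passage to complex coordinates straight long enough to pin down the correct weights of $L$; once the diagonal circle action is identified, the parity check $2(a+c)=3$ closes the argument in a single line.
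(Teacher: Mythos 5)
Your proof is correct, and it reaches the same pivotal identity as the paper --- namely that a homogeneous Hamiltonian of degree $2$ on $\SN^3$ forces a cubic homogeneous $H$ satisfying $(x_2\partial_{x_1}-x_1\partial_{x_2})H+(x_4\partial_{x_3}-x_3\partial_{x_4})H=0$ --- but you finish it in a genuinely different way. The paper writes out the general cubic $H_2$ (after first discarding the monomials $x_1^2x_2$, $x_1x_2^2$, $x_3^2x_4$, $x_3x_4^2$ using the skew-symmetry from its homogeneous classification), obtains a linear system in the $16$ remaining coefficients, and reports that computer algebra yields $b_1=\cdots=b_{16}=0$. You instead diagonalize the rotation generator $L$ in the coordinates $z_1=x_1+ix_2$, $z_2=x_3+ix_4$, where $L(z_1^a\bar z_1^bz_2^c\bar z_2^d)=-i(a-b+c-d)z_1^a\bar z_1^bz_2^c\bar z_2^d$, so $\ker L$ meets the degree-$3$ homogeneous polynomials only where $a+b+c+d=3$ and $a-b+c-d=0$ hold simultaneously, which fails by parity. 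This replaces the brute-force computation with a one-line weight argument, needs no preliminary pruning of monomials, and in fact generalizes at no extra cost: the same parity count shows that a homogeneous Hamiltonian vector field of any even degree $m$ on $\SN^{2n-1}$ must vanish, since its Hamiltonian would be homogeneous of odd degree $m+1$ and annihilated by the diagonal circle generator. Your preliminary reductions (that $H$ may be taken homogeneous of degree $3$, and that a homogeneous polynomial of positive degree vanishing on the sphere vanishes identically) are both sound.
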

\begin{proof}
 Suppose $\chi$ is a homogeneous vector field and $H_2$ is a Hamiltonian of it. Then, $H_2$ is a cubic homogeneous polynomial.  Hence,
$$-\frac{\partial H_2}{\partial x_2}=P_1,~\frac{\partial H_2}{\partial x_1}=P_2,~-\frac{\partial H_2}{\partial x_4}=P_3,~\frac{\partial H_2}{\partial x_3}=P_4.$$
Note that the monomials $x_1^2,x_2^2,x_3^2,x_4^2$ are not present in $P_1,P_2,P_3,P_4$, respectively. Hence, the monomials $x_1^2x_2,x_1x_2^2,x_3^2x_4,x_3x_4^2$ are not present in $H_2$. Thus, $H_2$ will be of the following form. 
\begin{dmath*}
H_2=b_1x_1^3+b_2x_1x_3^2+b_3x_1x_4^2+b_4x_1^2x_3+b_5x_1^2x_4+b_6x_1x_2x_3+b_7x_1x_2x_4+b_8x_1x_3x_4+b_9x_2^3+b_{10}x_2x_3^2+b_{11}x_2x_4^2+b_{12}x_2^2x_3+b_{13}x_2^2x_4+b_{14}x_2x_3x_4+b_{15}x_3^3+b_{16}x_4^3.
\end{dmath*}
We solve
 $-x_1\frac{\partial H_2}{\partial x_2}+x_2\frac{\partial H_2}{\partial x_1}-x_3\frac{\partial H_2}{\partial x_4}+x_4\frac{\partial H_2}{\partial x_3}=0$ and get that
 $b_1=\cdots=b_{16}=0.$ The computation is easy to do with some algebraic manipulator software. Hence, $H_2=0$. So, $\chi$ must be non-homogeneous.
\end{proof}

% We remark that any homogeneous vector field on $\SN^n$ has a polynomial first integral, namely $g=\sum\limits_{i=1}^{n+1} x_i^2 -1$, for $n\in \NN$. However, \Cref{thm_hamil_d12} says that there are homogeneous vector fields on $\SN^n$ which are not Hamiltonian.

%%%%%%%%%%%%%%%%%
%%%%%%%%%%%%%%%%%

\section{Invariant meridian and parallel hyperplanes}\label{sec:inv_mer_par}
In this section, we discuss the maximum number of invariant meridian and parallel hyperplanes for a polynomial vector field on $\SN^n$ and obtain some sharp bounds. We also present examples of vector fields on $\SN^n$ with no meridian hyperplanes.

\begin{remark}\label{rmk:bound-mer}
Llibre and Murza \cite{llibre2018darboux} proved that a polynomial vector field $(P_1, \ldots, P_{n+1})$ on $\SN^n$ ($n\geq 2$) with finitely many invariant meridian hyperplanes can have at most $\binom{n-1}{2}(m_1-1)+(\sum\limits_{i=1}^{n-1} m_i)+1$ invariant meridian hyperplanes where $\deg P_i=m_i$ with $m_1\geq \cdots\geq m_{n+1}$ and $\binom{1}{2}=0$. In particular, if $m_i=m$ for $i=1, \ldots, n-1$, then this bound is equal to $\binom{n-1}{2}(m-1)+(n-1)m +1$.
\end{remark}

\begin{proof}[\textbf{Proof of \Cref{mer-bound-reach}} (1)]
 Let $\chi=(P_1, \ldots, P_{n+1})$ be a vector field on $\SN^n$ given by \eqref{eq:deg-n-form} with $f_{ij}=0$ for any $i,j$ and $A_{ij}=f^{m-1}$ for any $1 \leq i < j \leq n+1$ where $f:=\sum\limits_{k=1}^{n} a_kx_k$ and $a_k\in \RR$. Then, $\chi$ is a degree $m$ vector field. Observe that $f^{m-1}$ divides $P_i$ for $1\leq i\leq n+1$. Consider the following extactic polynomial associated with $W=\langle x_1,\ldots,x_n\rangle$.
$$\mathcal{E}_W(\chi)=\begin{vmatrix}
x_1 &  v_2 & \cdots &   x_n \\ P_1 & P_2 & \cdots & P_n \\
\chi(P_1) & \chi(P_2) & \cdots & \chi(P_n)\\
\vdots & \vdots & \ddots  & \vdots \\ \chi^{n-2}(P_1) & \chi^{n-2}(P_2) & \cdots & \chi^{n-2}(P_n) \end{vmatrix}.$$
One can check that if $f^{\alpha}$ divides some polynomial $Q$ then $f^{m-2+\alpha}$ divides $\chi(Q)$. Hence, $f^{m_0+\cdots+m_{n-2}}$ divides $\mathcal{E}_W(\chi)$ where $m_0=m-1$ and $m_{j+1}=m-2+m_j$ for $0\leq j\leq n-3$. Note that $s:=m_0+\ldots+m_{n-2}=\frac{n-1}{2}(2(m-1)+(n-2)(m-2))=\binom{n-1}{2}(m-2)+(n-1)(m-1)$. Hence, $f^{s}$ divides $\mathcal{E}_W(\chi)$. We can see that $f=0$ is an invariant meridian hyperplane of $\chi$. Hence, the result follows.
\end{proof}

\begin{proof}[\textbf{Proof of \Cref{mer-bound-reach}} (2)] Let $\chi=(P_1, \ldots, P_{n+1})$ be a vector field on $\SN^n$ given by \eqref{eq:deg-n-form} with $f_{ij}=0$ for any $i, j$ and $A_{ij}=A \in \RR[x_1, \ldots, x_{n+1}]$ for any $1 \leq i < j \leq n+1$.  Then $A$ is a factor of $\chi^{\ell}(x_j)$ for $1 \leq \ell, j \leq n$. Therefore, $A^{n-1}$ is a factor of the extactic polynomial associated with the vector space generated by $\{x_1, \ldots, x_n\}$ for $\chi$. Let $A:=\prod\limits_{i=1}^{m-1}(d_{i1}x_1 + \cdots + d_{in}x_n)$. Then, for each $i \in \{1, \ldots, m-1\}$, the meridian hyperplane $d_{i1}x_1 + \cdots + d_{in}x_n=0$ is invariant. Thus, the proof follows. \end{proof}

\begin{proof}[\textbf{Proof of \Cref{mer-bound-reach}} (3)] Let $\chi_2 = (P_1, P_2, P_3)$ be a degree $m$ vector field on $\SN^2$. Then, from \eqref{eq:deg-n-form2}, $P_i=(1-\sum\limits_{k=1}^3 x_k^2)f_i+\sum\limits_{j=1}^3 A_{ij}x_j$ for $i=1,2,3$, where $(A_{ij})_{3 \times 3}$ is a skew-symmetric matrix. So, the extactic polynomial associated with $\left< x_1,x_2\right>$ is $$(1-\sum\limits_{k=1}^3x_k^2)(f_2x_1-f_1x_2)-A_{12}(x_1^2+x_2^2) +(A_{23}x_1-A_{13}x_2)x_3$$ of degree $m+1$. This can be expressed as $(x_1^2+x_2^2)f +x_3g+h$ for some $f \in  \RR[x_1, x_2]$ and $g,h \in \RR[x_1, x_2, x_3]$ with $\deg h\leq m-1$. Suppose that $\chi_2$ has $m+1$ invariant meridian hyperplane. Then, $(x_1^2+x_2^2)f +x_3g+h$ is a homogeneous polynomial in $\RR[x_1, x_2]$. Thus, $g=h=0$. Since $x_1^2+x_2^2$ is irreducible in $\RR[x_1, x_2]$, we get a contradiction. Therefore, the first claim follows.

Suppose that $(P_1, P_2, P_3)$ is given by \eqref{eq:deg-n-form} with $f_{ij}=0$ for all $1 \leq i,j \leq 3$ and $A_{ij}=0$ whenever $(i,j) \neq (2,3)$. Then the corresponding extactic polynomial is $A_{23}x_1x_3$. Assume that $A_{23} = \prod\limits_{i=1}^{m-1}(d_{i1}x_1 +d_{i2}x_2)$. Then, $x_1=0$ and $d_{i1}x_1 +d_{i2}x_2=0$ for $i=1,\ldots,m-1$ are invariant meridian hyperplanes. Hence, we achieve the bound.
\end{proof}

\begin{proof}[\textbf{Proof of \Cref{mer-bound-reach}} (4)] Consider the vector field $\chi_3=(P_1,\ldots, P_4)$ in $\RR^4$ with $$P_1=Ax_2,~P_2=-Ax_1, P_3=P_4=0.$$ Then $\chi_3$ is a vector field on $\SN^3$. Observe that $\mathcal{E}_{\langle x_1, x_2,x_3\rangle}(\chi_3)=-A^3x_3(x_1^2+x_2^2)$. Assume that $A=\prod\limits_{i=1}^{m-1} (a_ix_1+b_ix_2+c_ix_3)$. Note that for each $i\in \{1,\ldots,m-1\}$, the meridian hyperplane $a_ix_1+b_ix_2+c_ix_3=0$ is invariant with multiplicity 3. Also, the hyperplane $x_3=0$ is invariant with multiplicity 1. Thus, $\chi_3$ has $3m-2$ invariant meridian hyperplanes taking into account multiplicities.
\end{proof}
 \begin{remark}
     Llibre and Murza \cite{llibre2018darboux} provided a complex polynomial vector field on the complex sphere for which the corresponding bound in Remark \ref{rmk:bound-mer} is attained for $n=2$. However, for a real polynomial vector field on $\SN^2$, the bound is not sharp.
 \end{remark}

\begin{theorem}\label{thm:no-meridians}
There are polynomial vector fields on $\SN^n$ having no invariant meridian hyperplane.
\end{theorem}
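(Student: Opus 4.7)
The plan is to construct an explicit degree one vector field on $\SN^n$ admitting no invariant meridian hyperplane. First I would take $\chi = Ax$, where $A$ is the real $(n+1)\times(n+1)$ skew-symmetric matrix that is block-diagonal with rotation blocks $\bigl(\begin{smallmatrix} 0 & -1 \\ 1 & 0 \end{smallmatrix}\bigr)$, together with a single trailing zero entry when $n+1$ is odd. Equivalently, $P_{2j-1} = -x_{2j}$ and $P_{2j} = x_{2j-1}$ for $1 \leq j \leq \lfloor (n+1)/2 \rfloor$, with $P_{n+1}=0$ if $n+1$ is odd. By \Cref{thm:deg-n-vfld}, $\chi$ is a degree one vector field on $\SN^n$.

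Next, I would translate invariance of a meridian into a linear algebra problem. A meridian hyperplane is the zero set of a linear form $f = a_1 x_1 + \cdots + a_n x_n$ with $(a_1,\ldots,a_n) \neq 0$; setting $a = (a_1,\ldots,a_n,0)^t$, invariance requires $\chi(f) = K f$ for some polynomial $K$. Since $\deg\chi = 1$ and $\deg f = 1$, the cofactor $K$ is forced to be a constant $c \in \RR$. Rewriting $\chi(f) = a^t A x$, the identity $a^t A x = c\, a^t x$ holding for all $x$ is equivalent to $A^t a = c a$, which by skew-symmetry becomes $A a = -c a$. Thus an invariant meridian would correspond to a nonzero real eigenvector of $A$ whose last coordinate vanishes.

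Finally, I would invoke the standard fact that a real skew-symmetric matrix has only $0$ as a real eigenvalue, so necessarily $c = 0$ and $a \in \ker A$. For the chosen $A$, $\ker A = \{0\}$ if $n+1$ is even, while $\ker A = \operatorname{span}(e_{n+1})$ if $n+1$ is odd; in either case no nonzero vector with vanishing last coordinate lies in $\ker A$, so $\chi$ has no invariant meridian hyperplane.

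The argument is quite clean and I do not anticipate a substantial obstacle; the only point that requires care is that the cofactor $K$ is a priori allowed to be any polynomial, but the degree bound $\deg K \leq \deg \chi - 1 = 0$ collapses it to a scalar, which is exactly what enables the reduction to real eigenvectors of a skew-symmetric matrix.
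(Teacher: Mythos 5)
Your proposal is correct and takes essentially the same route as the paper: both construct the block-rotation degree-one field $P_{2j-1}=\pm x_{2j}$, $P_{2j}=\mp x_{2j-1}$ (with a trailing zero component in the odd case) and show that the invariance equation forces every coefficient of a meridian linear form to vanish. Your phrasing via real eigenvectors of a skew-symmetric matrix and the kernel computation is a slightly cleaner repackaging of the paper's direct coefficient comparison, but the underlying argument is identical.
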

\begin{proof}
    Suppose $P_1,\ldots,P_{2n}\in \RR[x_1,\ldots,x_{2n}]$ such that $P_{2i-1}=A_ix_{2i}$ and $P_{2i}=-A_ix_{2i-1}$ where $A_i$ are non-zero real numbers for $i=1,\ldots,n$. One can check that $\chi=(P_1,\ldots,P_{2n-1})$ is a vector field on $\SN^{2n-1}$. Now, assume that $\sum\limits_{i=1}^{2n-1}a_ix_i=0$ is an invariant meridian hyperplane of $\chi$. Then, there exists $k_0\in \RR$ such that
         $$   \chi(\sum\limits_{i=1}^{2n-1}a_ix_i) =k_0(\sum\limits_{i=1}^{2n-1}a_ix_i)
            \implies \sum\limits_{i=1}^{2n-1}a_iP_i =k_0(\sum\limits_{i=1}^{2n-1}a_ix_i).$$
 
    Equating the coefficients of $x_1$ and $x_2$ on both sides, we get $-a_2A_1=k_0a_1$ and $a_1A_1=k_0a_2$, respectively. This gives $a_1=a_2=0$. Similarly, we obtain $a_3=\cdots=a_{2n-2}=0.$ Also, equating coefficient of $x_{2n}$ both side, we get $a_{2n-1}=0$. Hence, we cannot have any invariant meridian hyperplane for $\chi$.

    Similarly, we consider the vector field $\chi'=(Q_1,\ldots,Q_{2n+1})$ on $\SN^{2n}$ such that $Q_{2i-1}=B_ix_{2i}$, $Q_{2i}=-B_ix_{2i-1}$ and $Q_{2n+1}=0$ where $B_i$ are non-zero real numbers for $i=1,\ldots,n$. One can check that $\chi'$ has no invariant meridian hyperplane using similar arguments for the case of $\SN^{2n-1}$.
\end{proof}

% \begin{remark}
Llibre and Murza \cite{llibre2018darboux} showed that a degree $m$ vector field on $\SN^n$ can have $m$ invariant parallel hyperplanes. However, that bound is not sharp.

\begin{proof}[\textbf{Proof of \Cref{thm:parallel-bound}}]
    Assume that $\chi=(P_1,\ldots,P_{n+1})$. Suppose that $\chi$ has $m$ invariant parallel hyperplanes. Then, $P_{n+1}=c\prod\limits_{i=1}^{m}(x_{n+1}-k_i)$ for some $k_i,c\in \RR$ with $c\neq 0$ and $-1<k_i< 1$. Moreover, from \eqref{eq:deg-n-form2}, we have $P_{n+1}=(1-\sum\limits_{k=1}^{n+1} x_k^2)f_{n+1}+\sum\limits_{j=1}^{n}A_{(n+1)j}x_j$ for some $f_{n+1}\in \RR[x_1,\ldots,x_{n+1}]$. Hence, 
    \begin{equation}\label{eq:m-parallel}
        (1-\sum\limits_{k=1}^{n+1} x_k^2)f_{n+1}+\sum\limits_{j=1}^{n}A_{(n+1)j}x_j=c\prod\limits_{i=1}^{m}(x_{n+1}-k_i).
    \end{equation}
    The equation \eqref{eq:m-parallel} is true for all $(x_1,\ldots,x_{n+1})\in \RR^{n+1}$. In particular, for $x_1=\cdots=x_n=0$, \eqref{eq:m-parallel} gives the following.
    $$(1-x_{n+1}^2)f_{n+1}(0,\ldots,0,x_{n+1})=c\prod\limits_{i=1}^{m}(x_{n+1}-k_i).$$
    Hence, $1-x_{n+1}^2$ divides $P_{n+1}$. This contradicts the assumption. Therefore, $\chi$ has at most $m-1$ invariant parallel hyperplanes.

    Consider the vector field $\chi=(P_1,\ldots,P_{n+1})$ of the form \eqref{eq:deg-n-form2} with $f_{n+1}=0$ and $A_{(n+1)j}=\prod\limits_{i=1}^{m-1}(x_{n+1}-k_i)$  for some $k_i\in \RR$. Then, $P_{n+1}=(x_1+\ldots+x_n)\prod\limits_{i=1}^{m-1}(x_{n+1}-k_i)$. Hence, $\chi$ is a degree $m$ vector field on $\SN^n$ and it has $m-1$ invariant parallel hyperplanes. Thus, the bound is reached.
\end{proof}

%%%%%%%%%%%%%%%%%%%%%%%%%%%%
%%%%%%%%%%%%%%%%%%%%%%%%%%%%

\section{Homogeneous polynomial vector fields on $\SN^n$}\label{sec:hom_vf}
In this section, we study homogeneous polynomial vector fields on $\SN^n$ and their invariant $(n-1)$-sphere. Llibre and Pessoa \cite{llibre2006homogeneous} described the equation of the cone on a circle in $\SN^2$. We generalize that to find the equation of the cone on an arbitrary $(n-1)$-sphere in $\SN^n$. Note that the authors and Benny studied the number of great circles for homogeneous polynomial vector fields on $\SN^2$ in \cite{BJS_24}.

\begin{proposition}\label{prop:cone}
    Let $\mathcal{S}:=\{\sum\limits_{i=1}^{n+1} a_ix_i=d\}\cap \SN^n$ be an $(n-1)$-sphere in $\SN^n$ and $$C(\mathcal{S}):=\{(x_1,...,x_{n+1})\in \RR^{n+1} ~|~(\sum\limits_{i=1}^{n+1} a_ix_i)^2-d^2(\sum\limits_{i=1}^{n+1} x_i^2)=0\}.$$ Then $C(\mathcal{S})=\{s(x_1,...,x_{n+1})~|~(x_1,...,x_{n+1})\in \mathcal{S},s\in \RR\}$.
\end{proposition}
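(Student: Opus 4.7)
The plan is to prove the set equality by establishing both inclusions directly from the defining equation of $C(\mathcal{S})$.

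The reverse inclusion $\{s(x_1,\ldots,x_{n+1}) : (x_1,\ldots,x_{n+1}) \in \mathcal{S},\, s \in \RR\} \subseteq C(\mathcal{S})$ is a one-line substitution. If $(y_1,\ldots,y_{n+1}) = s(x_1,\ldots,x_{n+1})$ with $(x_1,\ldots,x_{n+1}) \in \mathcal{S}$, then $\sum a_i y_i = sd$ and $\sum y_i^2 = s^2$, so plugging into the defining equation gives $(sd)^2 - d^2 s^2 = 0$. I would dispatch this first.

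The forward inclusion is the substantive part. Given $(y_1,\ldots,y_{n+1}) \in C(\mathcal{S})$, I would split into two cases. If $(y_1,\ldots,y_{n+1}) = 0$, then I take $s = 0$ and any point of $\mathcal{S}$, noting that $\mathcal{S}$ is non-empty (this is built into the setup: $\mathcal{S}$ is assumed to be an $(n-1)$-sphere, which forces $d^2 < \sum a_i^2$, so the hyperplane meets $\SN^n$). Otherwise, set $r := \sqrt{\sum y_i^2} > 0$. The defining equation $(\sum a_i y_i)^2 = d^2 r^2$ yields $\sum a_i y_i = \varepsilon d r$ for some $\varepsilon \in \{+1,-1\}$. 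Setting $s := \varepsilon r$ and $x_i := y_i/s$ then produces a point with $\sum x_i^2 = 1$ and $\sum a_i x_i = d$, i.e.\ a point of $\mathcal{S}$, and $(y_1,\ldots,y_{n+1}) = s(x_1,\ldots,x_{n+1})$.

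The only subtle point, and thus the principal obstacle (though a minor one), is the sign choice: one must take $s = \pm r$ so that the rescaled vector lands on the affine hyperplane $\sum a_i x_i = d$ rather than on $\sum a_i x_i = -d$. Apart from that, the argument is a direct algebraic manipulation with no further ingredients needed.
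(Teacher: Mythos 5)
Your proof is correct and follows essentially the same route as the paper's: verify the reverse inclusion by direct substitution, and for the forward inclusion normalize by $|y|$, extract $\sum a_i y_i/|y| = \pm d$, and choose the sign of $s$ so the rescaled point lands on $\mathcal{S}$. Your explicit treatment of the case $y=0$ and of the sign choice is slightly more careful than the paper's ``without loss of generality,'' but it is the same argument.
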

\begin{proof}
    Suppose that $(x_1,...,x_{n+1})\in \mathcal{S}$. Then, $\sum\limits_{i=1}^{n+1} a_ix_i=d$ and $\sum\limits_{i=1}^{n+1} x_i^2=1$. Hence, $(\sum\limits_{i=1}^{n+1} a_ix_i)^2-d^2(\sum\limits_{i=1}^{n+1} x_i^2)=0$. Also, $(\sum\limits_{i=1}^{n+1} a_isx_i)^2-d^2(\sum\limits_{i=1}^{n+1} s^2x_i^2)=0$ for any $s\in \RR$. So, $\{s(x_1,...,x_{n+1}):(x_1,...,x_{n+1})\in \mathcal{S},s\in \RR\} \subseteq C(\mathcal{S})$.

    Conversely, suppose that $(y_1,...,y_{n+1})\in C(\mathcal{S})$. Then $(\sum\limits_{i=1}^{n+1} a_iy_i)^2-d^2(\sum\limits_{i=1}^{n+1} y_i^2)=0$. So, we obtain $(\sum\limits_{i=1}^{n+1} a_i\frac{y_i}{|y|})^2=d^2$ where $|y|:=\sqrt{\sum\limits_{i=1}^{n+1}y_i^2}$. Hence, $\sum\limits_{i=1}^{n+1} a_i\frac{y_i}{|y|}=\pm d$. Without loss of generality, assume that $\sum\limits_{i=1}^{n+1} a_i\frac{y_i}{|y|}= d$. Now, assuming $(x_1,...,x_{n+1}):=\frac{1}{|y|}(y_1,...,y_{n+1})$, we find $(x_1,...,x_{n+1})\in \mathcal{S}$ and $(y_1,...,y_{n+1})=|y|(x_1,...,x_{n+1})$. Thus, the result is proved.
\end{proof}
The hypersurface $\mathcal{C}(\mathcal{S})$ is said to be the \textit{cone on $\mathcal{S}$}.
\begin{lemma}\label{lem:cone-invariant}
   Suppose that $\chi$ is a homogeneous polynomial vector field on $\SN^n$. Then, the set $\mathcal{S}$ is invariant if and only if the set $\mathcal{C}(\mathcal{S})$ is invariant for the vector field $\chi$.
\end{lemma}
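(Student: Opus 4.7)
The plan is to exploit the homogeneity of $\chi$ to pass between orbit-theoretic statements on $\SN^n$ and on $\RR^{n+1}$. The key observation is that homogeneity of degree $m$ gives $\chi(sx) = s^{m}\chi(x)$ for all $s \in \RR$ (and in particular $\chi(0) = 0$ when $m \geq 1$), so if $\gamma(t)$ is a trajectory of $\chi$ starting at $x$, then for every $s \neq 0$ the reparametrized curve $t \mapsto s\gamma(s^{m-1}t)$ satisfies the same ODE and starts at $sx$. Consequently, the radial dilation $x \mapsto sx$ carries trajectories to trajectories as sets.

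For the direction $\mathcal{S}$ invariant $\Rightarrow$ $\mathcal{C}(\mathcal{S})$ invariant, I would invoke \Cref{prop:cone} to write $\mathcal{C}(\mathcal{S}) = \bigcup_{s \in \RR} s\mathcal{S}$. Assuming $\mathcal{S}$ is a union of trajectories, each $s\mathcal{S}$ for $s \neq 0$ is a union of trajectories by the dilation observation above, while $0 \cdot \mathcal{S} = \{0\}$ is a fixed point of $\chi$. Hence $\mathcal{C}(\mathcal{S})$ is a union of trajectories.

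For the converse, I would first note that since $\SN^n$ is invariant, so is $\mathcal{C}(\mathcal{S}) \cap \SN^n$. A point of $\SN^n$ lies in this intersection iff $(\sum_{i} a_i x_i)^2 = d^2$, i.e., $\sum_{i} a_i x_i = \pm d$, so
\[
\mathcal{C}(\mathcal{S}) \cap \SN^n = \mathcal{S} \cup (-\mathcal{S}), \qquad -\mathcal{S} := \Bigl\{x \in \SN^n : \sum_{i=1}^{n+1} a_i x_i = -d\Bigr\}.
\]
When $d = 0$ we have $\mathcal{S} = -\mathcal{S}$ and we are done. When $d \neq 0$, the two sets are disjoint, and each is a connected $(n-1)$-sphere in $\SN^n$ (using the standing assumption $n \geq 2$ that the paper imposes for invariant $(n-1)$-spheres). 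Since every trajectory is the continuous image of an interval and hence path-connected, no trajectory of the invariant set $\mathcal{S} \cup (-\mathcal{S})$ can meet both components; thus the collection of trajectories covering $\mathcal{C}(\mathcal{S}) \cap \SN^n$ splits into two subfamilies, one filling $\mathcal{S}$ and the other filling $-\mathcal{S}$, so $\mathcal{S}$ is itself a union of trajectories.

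The main obstacle I foresee is precisely this converse in the case $d \neq 0$: we must separate invariance of the single component $\mathcal{S}$ from invariance of the disjoint union $\mathcal{S} \sqcup (-\mathcal{S})$. The connectedness/path-connectedness argument resolves this cleanly, and no additional algebraic identity (such as an explicit cofactor relation between $F = (\sum a_i x_i)^2 - d^2 \sum x_i^2$ and $L = \sum a_i x_i - d$) is needed, which is a notable simplification afforded by the homogeneity hypothesis.
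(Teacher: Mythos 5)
Your proof is correct and follows essentially the same route as the paper: the forward direction uses the dilation of trajectories afforded by homogeneity (the paper checks tangency of $\chi$ along $s\phi(t,x)$, and records your exact reparametrization $s\phi(s^{m-1}t,x)$ in the remark immediately after the lemma), and the converse uses the same decomposition $\mathcal{C}(\mathcal{S})\cap\SN^n=\mathcal{S}\sqcup(-\mathcal{S})$ together with path-connectedness of trajectories. Your explicit handling of the apex $0\in\mathcal{C}(\mathcal{S})$ as a fixed point is a small tidy addition the paper leaves implicit.
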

\begin{proof}
   Suppose that $\mathcal{S}$ is invariant for the vector field $\chi:=(P_1,\ldots,P_{n+1})$ and $\deg \chi=m$. So, $\chi$ is a vector field on $\mathcal{S}$. We show that $\chi$ is also a vector field on $\mathcal{C}(\mathcal{S})$. Take $y\in \mathcal{C}(\mathcal{S})$. Then, $y=sx$ for some $s\in \RR$ and $x\in \mathcal{S}$. Let $\phi(t,x)$ be the integral curve of $\chi$ through $x$. We define, $\psi:\RR\to \mathcal{C}(\mathcal{S})$ given by $\psi(t)=s\phi(t,x)$. Observe that $\psi(0)=y$. Also,
   \begin{equation*}
   \begin{split}
       \frac{d \psi}{dt}\Big|_{t=0}=s\frac{d \phi}{dt}\Big|_{t=0}=s(P_1(x),\ldots,P_{n+1}(x))&=\frac{1}{s^{m-1}}(P_1(sx),\ldots,P_{n+1}(sx))\\
       &=\frac{1}{s^{m-1}}(P_1(y),\ldots,P_{n+1}(y)).
       \end{split}
    \end{equation*}
Therefore, $\chi$ is a vector field on $\mathcal{C}(\mathcal{S})$.

Conversely, assume that $\mathcal{C}(\mathcal{S})$ is invariant for the vector field $\chi$ on $\SN^n$. Then, $$\mathcal{C}(\mathcal{S})\cap \SN^n=\Big\{\sum\limits_{i=1}^{n+1} a_ix_i=\pm d\Big\}\cap \SN^n = -\mathcal{S} \sqcup \mathcal{S}$$ is invariant. Therefore, $\mathcal{S}$ is also invariant, since a trajectory is path-connected.
\end{proof}
\begin{remark}
    One can also check that if $\phi(t,x)$ is the flow of $\chi$ on $\SN^n$ then $s\phi(s^{m-1}t,x)$ is the flow of $\chi$ on the $n$-dimensional sphere of radius $s$.
\end{remark}

\begin{theorem}
\label{prop:hom-classify}
The polynomial vector field $(P_1, \ldots, P_{n+1})$  on $\SN^n$ is homogeneous of degree $m$ if and only if $P_i = \sum\limits_{j=1}^{n+1} A_{ij} x_j$ for $i=1, \ldots, n+1$, where $(A_{ij})$ is a skew-symmetric matrix and $A_{ij}$ is either zero or a homogeneous polynomial of degree $(m-1)$.
\end{theorem}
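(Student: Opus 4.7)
The plan is to split the statement into the two implications. The forward (easy) direction is a one-line calculation, while the reverse direction rests entirely on Lemma \ref{lem:sum-pi-xi-0} applied with $k=1$, once I have argued that $\sum_i x_i P_i$ must vanish identically.

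For the sufficiency direction, I would simply observe that if each $P_i = \sum_j A_{ij}x_j$ with the nonzero $A_{ij}$ homogeneous of degree $m-1$ and $(A_{ij})$ skew-symmetric, then each $P_i$ is automatically homogeneous of degree $m$. Skew-symmetry of $(A_{ij})$ then gives $\sum_i x_i P_i = \sum_{i,j} A_{ij} x_i x_j = 0$, hence $\chi\bigl(\sum_k x_k^2 - 1\bigr) = 2\sum_i x_i P_i = 0$, so $\SN^n$ is invariant under $\chi$.

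The interesting direction is necessity. I start from the defining relation $\sum_{i=1}^{n+1} x_i P_i = K\bigl(\sum_{k=1}^{n+1} x_k^2 - 1\bigr)$ with $\deg K \leq m-1$, and aim to prove $K \equiv 0$ by comparing homogeneous components. Since each $P_i$ is homogeneous of degree $m$, the left side is homogeneous of degree $m+1$. Writing $K = \sum_{j\geq 0} K^{(j)}$ in its homogeneous components, the degree-$j$ part of the right side equals $K^{(j-2)}\sum_k x_k^2 - K^{(j)}$ (with the convention $K^{(j)}=0$ for $j<0$). For every $j \neq m+1$, this degree-$j$ part must vanish, yielding the recursion $K^{(j)} = K^{(j-2)}\sum_k x_k^2$. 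Base cases $j = 0, 1$ give $K^{(0)} = K^{(1)} = 0$, and a straightforward induction on $j$ then forces $K^{(j)} = 0$ for every $0 \leq j \leq m-1$, so $K = 0$ and therefore $\sum_i x_i P_i = 0$. Lemma \ref{lem:sum-pi-xi-0} (with $k=1$) now yields polynomials $A_{ij}$, with $(A_{ij})$ skew-symmetric, such that $P_i = \sum_j A_{ij} x_j$. Finally, since each $P_i$ is homogeneous of degree $m$, I replace each $A_{ij}$ by its degree $(m-1)$ homogeneous component; extracting a fixed degree is linear, so this replacement preserves skew-symmetry, producing the required representation.

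The only mildly delicate step is the homogeneous-component bookkeeping that forces $K = 0$: one must track both parities of $j$ and include the boundary case $j = m$, which reads $K^{(m-2)}\sum_k x_k^2 = K^{(m)} = 0$ and so feeds back consistently into the induction. Apart from that, everything reduces to an appeal to Lemma \ref{lem:sum-pi-xi-0} or a direct verification, so I do not anticipate any further obstacle.
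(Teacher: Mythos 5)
Your proof is correct, but the necessity direction takes a genuinely different route from the paper's. The paper gets $\sum_{i}x_iP_i\equiv 0$ geometrically: by Lemma \ref{lem:cone-invariant} a homogeneous field leaving $\SN^n$ invariant leaves the sphere of every radius invariant, so the degree-$(m+1)$ homogeneous polynomial $\sum_i x_iP_i$ vanishes on all of $\RR^{n+1}$; it then cites Lemma \ref{lem:sum-pi-xi-0} and stops. You instead work directly with the cofactor identity $\sum_i x_iP_i = K(\sum_k x_k^2-1)$ and run a homogeneous-component recursion $K^{(j)}=K^{(j-2)}\sum_k x_k^2$ (valid for all $j\neq m+1$, which covers every $j\le \deg K\le m-1$) to force $K\equiv 0$ — this is essentially the ``algebraic calculation'' that the authors mention as an alternative in the remark following their proof, and your bookkeeping of it is sound. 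Both arguments then funnel into Lemma \ref{lem:sum-pi-xi-0} with $k=1$. Your version is self-contained (no dependence on the cone lemma) and has the additional merit of making explicit the final step the paper leaves implicit: that one may pass to the degree-$(m-1)$ homogeneous components of the $A_{ij}$, with skew-symmetry preserved because extracting a fixed-degree component is linear. The paper's version buys brevity and a geometric picture; yours buys a purely algebraic argument in the spirit of Theorem \ref{thm:deg-n-vfld}.
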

\begin{proof}
    Suppose that $\chi$ is a degree $m$ homogeneous polynomial vector field on $\SN^n$. Thus, we have $\chi(\sum\limits_{i=1}^{n+1}x_i^2-1)=2\sum\limits_{i=1}^{n+1}P_ix_i=0$ on $\SN^n$. By Lemma \ref{lem:cone-invariant}, $\chi$ is a homogeneous polynomial vector field on the $n$-dimensional sphere of arbitrary radius with origin at $(0,\ldots,0)\in \RR^{n+1}$. So, $\sum\limits_{i=1}^{n+1}P_ix_i=0$ for any $(x_1,\ldots,x_{n+1})\in \RR^{n+1}$. Hence, the proof follows from Lemma \ref{lem:sum-pi-xi-0}.

    If $P_i = \sum\limits_{j=1}^{n+1} A_{ij} x_j$ for $i=1, \ldots, n+1$, where $(A_{ij})$ is a skew-symmetric matrix with each $A_{ij}$ is either zero or a homogeneous polynomial of degree $(m-1)$, then one can verify that $\chi=(P_1,\ldots,P_{n+1})$ is a degree $m$ vector field on $\SN^n$.
\end{proof}

We remark that one can prove \Cref{prop:hom-classify} using \Cref{thm:deg-n-vfld} and some algebraic calculation. However, our proof is shorter and geometric.

Next we look at the invariant $(n-1)$-spheres in $\SN^n$. \Cref{lem:cone-invariant} states that an $(n-1)$-sphere in $\SN^n$ is invariant if and only if the entire cone on the $(n-1)$-sphere is invariant. Suppose $\mathcal{S}:=\{\sum\limits_{i=1}^{n+1} a_ix_i+d=0\}\cap \SN^n$ is an $(n-1)$-sphere in $\SN^n$. Note that the equation of the cone on $\mathcal{S}$ is 
$$(\sum\limits_{i=1}^{n+1} a_ix_i)^2-d^2(\sum\limits_{i=1}^{n+1} x_i^2)=0~\mbox{if $d\neq 0$}.$$
If $d=0$, then the equation of the cone is $\sum\limits_{i=1}^{n+1} a_ix_i=0$, i.e., the cone is the $(n-1)$-sphere itself.

\begin{proof}[\textbf{Proof of \Cref{thm_sp_grtsp}}]
    Without loss of generality, suppose that $\{x_{n+1}+d=0\}\cap \SN^n$ is an invariant $(n-1)$-sphere for $\chi$. It is enough to prove that $d=0$.
    By Lemma \ref{lem:cone-invariant}, $\mathcal{S}=\{x_{n+1}+d=0\}\cap \SN^n$ is invariant if and only if $\mathcal{C}(\mathcal{S})=\{x_{n+1}^2-d^2(\sum\limits_{i=1}^{n+1}x_i^2)=0\}$ is invariant. If $\mathcal{C}(\mathcal{S})$ is invariant, there exists $K\in \RR[x_1,\ldots,x_{n+1}]$ such that
    \begin{equation*}
        \chi(x_{n+1}^2-d^2(\sum\limits_{i=1}^{n+1}x_i^2))=K(x_{n+1}^2-d^2(\sum\limits_{i=1}^{n+1}x_i^2))
    \end{equation*}
Hence, we have
$$2P_{n+1} x_{n+1}=K(x_{n+1}^2-d^2(\sum\limits_{i=1}^{n+1}x_i^2)),$$
since $\chi$ is homogeneous. Recall that $P_{n+1}$ does not contain any monomial of the form $x_{n+1}^{\alpha}$. Therefore, $K$ also does not contain any monomial of the form $x_{n+1}^{\alpha}$. Assume that $d\neq 0$. Then, $x_{n+1}$ must divide $K$. However, $\deg \chi\leq 2$ and consequently $\deg(K)\leq 1$. So, $K$ must be zero. This implies that $P_{n+1}=0$. This contradicts the fact that $\chi$ is homogeneous. Thus, $d$ must be zero.
\end{proof}

\begin{proposition}\label{prop:inv-sp-not-grt}
Let $\chi=(P_1,...,P_{n+1})$ be a homogeneous degree $m\geq 3$ vector field on $\SN^n$. Suppose that $\chi$ has an invariant $(n-1)$-sphere $\{x_{j}+d=0\}\cap \SN^n$ with $0 < |d| <1$. Then $P_{j}=(\sum\limits_{i=1,i\neq j}^{n+1} B_ix_i)(x_{j}^2-d^2(\sum\limits_{i=1}^{n+1}x_i^2))$ where $B_i$ are polynomials of degree at most $m-3$.    
\end{proposition}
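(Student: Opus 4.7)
The plan is to exploit the cone-invariance characterization (Lemma \ref{lem:cone-invariant}) together with the structural consequence of homogeneity, namely $\sum_{i=1}^{n+1} P_i x_i = 0$ identically on $\RR^{n+1}$ (which follows since the cone equation specializes to the sphere equation, or directly from the skew-symmetric form in Theorem \ref{prop:hom-classify}). Under this identity, the cofactor equation for the cone simplifies drastically.

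First I would apply Lemma \ref{lem:cone-invariant}: the invariance of $\{x_j + d=0\} \cap \SN^n$ is equivalent to the invariance of the cone $\{x_j^2 - d^2 \sum_{i=1}^{n+1} x_i^2 = 0\}$. So there exists $K \in \RR[x_1,\ldots,x_{n+1}]$ with
\begin{equation*}
\chi\bigl(x_j^2 - d^2 \textstyle\sum_{i=1}^{n+1} x_i^2\bigr) = K \bigl(x_j^2 - d^2 \textstyle\sum_{i=1}^{n+1} x_i^2\bigr).
\end{equation*}
Expanding the left side gives $2 P_j x_j - 2 d^2 \sum_{i} P_i x_i$, and since $\chi$ is homogeneous on $\SN^n$ we have $\sum_i P_i x_i = 0$ identically on $\RR^{n+1}$. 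Thus
\begin{equation*}
2 P_j x_j = K \bigl(x_j^2 - d^2 \textstyle\sum_{i=1}^{n+1} x_i^2\bigr).
\end{equation*}
Counting degrees, $K$ is homogeneous of degree $m-1$.

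Next I would show $x_j$ divides $K$. Setting $x_j = 0$ on the right gives $K|_{x_j=0} \cdot (-d^2 \sum_{i \neq j} x_i^2)$, while the left side vanishes. Since $d \neq 0$ and $\sum_{i \neq j} x_i^2 \not\equiv 0$, we conclude $K|_{x_j = 0} = 0$, i.e. $K = x_j L$ for a homogeneous polynomial $L$ of degree $m-2$. Cancelling $x_j$ yields
\begin{equation*}
2 P_j = L \bigl(x_j^2 - d^2 \textstyle\sum_{i=1}^{n+1} x_i^2\bigr).
\end{equation*}

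Finally, I must show $L$ can be written as $2 \sum_{i \neq j} B_i x_i$ with $B_i$ homogeneous of degree $m-3$. By Theorem \ref{prop:hom-classify}, $P_j = \sum_{i \neq j} A_{ji} x_i$ has no monomial that is a pure power of $x_j$. Setting $x_i = 0$ for all $i \neq j$ in the displayed equation gives $0 = L(0,\ldots,x_j,\ldots,0)\cdot (1-d^2) x_j^2$, and since $|d| < 1$ we get $L(0,\ldots,x_j,\ldots,0)=0$. Hence every monomial of $L$ contains at least one factor $x_i$ with $i \neq j$; grouping monomials (for instance by the smallest index $i \neq j$ appearing) produces the decomposition $L = 2 \sum_{i \neq j} B_i x_i$ with each $B_i$ homogeneous of degree $m-3$ (here $m \geq 3$ is essential so that $m-3 \geq 0$). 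Substituting back yields the desired form for $P_j$. The one substantive step is the cancellation/divisibility argument showing $x_j \mid K$ and that $L$ has no pure $x_j$ monomial; everything else is a routine bookkeeping of degrees.
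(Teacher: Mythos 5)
Your proposal is correct and follows essentially the same route as the paper: reduce to invariance of the cone via Lemma \ref{lem:cone-invariant}, use $\sum_i P_i x_i = 0$ from homogeneity to get $2P_jx_j = K\bigl(x_j^2 - d^2\sum_i x_i^2\bigr)$, deduce $x_j \mid K$, and then rule out pure $x_j$-monomials in the quotient. Your substitutions $x_j=0$ and $x_i=0$ for $i\neq j$ (using $d\neq 0$ and $|d|<1$ respectively) simply make explicit the two divisibility claims the paper asserts more tersely.
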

\begin{proof}
    Since $\chi = (P_1, \ldots, P_{n+1})$ is homogeneous, by \Cref{prop:hom-classify}, $\sum\limits_{i=1}^{n+1}P_ix_i=0$. By the hypothesis, $\{x_{j}+d=0\}\cap \SN^n$ is invariant under $\chi$ with $d\neq 0$. Then, by Lemma \ref{lem:cone-invariant}, $f:=x_{j}^2-d^2(\sum\limits_{i=1}^{n+1}x_i^2)$ is also invariant. So, for some $K\in \RR[x_1,...,x_{n+1}]$, we have
 $$        \chi f=Kf 
            \implies  2P_{j}x_{j}=K(x_{j}^2-d^2(\sum\limits_{i=1}^{n+1}x_i^2)).$$
From the above, we get $x_{j}$ divides $K$, say $K=2K'x_{j}$. Hence,
$$P_{j}=K'(x_{j}^2-d^2(\sum\limits_{i=1}^{n+1}x_i^2)).$$
Observe that $K'$ should not have any monomial of the form $x_{j}^{\alpha}$ since $P_{j}$ does not have any such monomial. Therefore, $K'=\sum\limits_{i=1,i\neq j}^{n+1} B_ix_i$ where $B_i$ are polynomials of degree at most $m-3$, for $i\in \{1,\ldots,j-1,j+1,\ldots,n+1\}$. Consequently, we obtain that ${P_{j}=(\sum\limits_{i=1,i\neq j}^{n+1} B_ix_i)(x_{j}^2-d^2(\sum\limits_{i=1}^{n+1}x_i^2))}$.
\end{proof}

\begin{proof}[\textbf{Proof of \Cref{thm_inv_grt_deg1}}]
    Let $\chi=(P_1,\ldots,P_n)$ be a degree one vector field on $\SN^n$. By \Cref{thm:deg-n-vfld}, $P_i=\sum\limits_{j=1}^{n+1}A_{ij}x_j$ where $A=(A_{ij})_{(n+1)\times (n+1)}$ is a constant real skew-symmetric matrix. Suppose $\mathcal{S}:=\{\sum\limits_{i=1}^{n+1}a_ix_i=0\}\cap \SN^n$ is an invariant great $(n-1)$-sphere for $\chi$. By \Cref{lem:cone-invariant}, $\mathcal{S}$ is invariant if and only if $\mathcal{C}(\mathcal{S})$ is invariant. So, it is enough to find the maximum number of invariant $\mathcal{C}(\mathcal{S})=\{\sum\limits_{i=1}^{n+1}a_ix_i=0\}$. Suppose that $\mathcal{C}(\mathcal{S})$ is invariant. Then, there exists $k_0\in \RR$ such that 
    \begin{equation*}
        \begin{split}
            \sum\limits_{i=1}^{n+1}a_iP_i=k_0(\sum\limits_{i=1}^{n+1}a_ix_i)
            \implies a^tAx=k_0a^tx
        \end{split}
    \end{equation*}
    where $a^t=\begin{pmatrix}
        a_1&\cdots&a_{n+1}
    \end{pmatrix}$ and $x^t=\begin{pmatrix}
        x_1&\cdots&x_{n+1}
    \end{pmatrix}$. The equality is true for every $x^t\in \RR^{n+1}$. Hence, $a^tA=k_0a^t$, which gives $Aa=-k_0a$. So, $a$ is an eigenvector of $A$ corresponding to $-k_0$. Note that 0 is the only real eigenvalue of $A$ since it is a skew-symmetric matrix. Hence, $\{\sum\limits_{i=1}^{n+1}a_ix_i=0\}\cap \SN^n$ is invariant if and only if $a=\begin{pmatrix}
        a_1&\cdots&a_{n+1}
    \end{pmatrix}^t$ is an eigenvector of $A$ corresponding to the eigenvalue 0.
\end{proof}
\begin{remark}
The set $\{\sum\limits_{i=1}^{n+1}a_ix_i=0\}\cap \SN^n$ is an invariant great $(n-1)$-sphere of $\chi$ if and only if $\sum\limits_{i=1}^{n+1}a_ix_i$ is a first integral of $\chi$. So, a degree one vector field on $\SN^n$ with finitely many invariant great $(n-1)$-spheres can have at most one invariant great $(n-1)$-sphere.
\end{remark}

Let $\{e_1, \ldots, e_{n+1}\}$ be the standard basis of $\RR^{n+1}$ and $X^t:=(x_1, \ldots, x_{n+1})$.

\begin{proof}[\textbf{Proof of \Cref{thm_inv_grt_deg2}}]
Proposition \ref{prop:cone} and Lemma \ref{lem:cone-invariant} give that the set $\{{\bf a}^t X =0\} \cap \SN^n$  is invariant if and only if ${\bf a}^t X =0$ is invariant. Thus, the set $\{{\bf a}^t X =0\} \cap \SN^n$  is an invariant great $(n-1)$-sphere if and only if $\sum\limits_{i=1}^{n+1} a_i P_i = (X^t {\bf b})({\bf a}^t X)$ for some ${\bf b}^t\in \RR^{n+1}$. Notice that 
\begin{equation*}
    \begin{split}
        &X^t(\sum\limits_{i=1}^{n+1}a_i B_i - {\bf b}{\bf a}^t)X =0\\
        \iff & (\sum\limits_{i=1}^{n+1}a_i B_i - {\bf b}{\bf a}^t)^t=-(\sum\limits_{i=1}^{n+1}a_i B_i - {\bf b}{\bf a}^t)~\mbox{ since $X$ is arbitrary, see \cite[Chapter I]{elman2008algebraic}}\\
        \iff &\sum\limits_{i=1}^{n+1}a_i B_i - {\bf a}{\bf b}^t=-(\sum\limits_{i=1}^{n+1}a_i B_i - {\bf b}{\bf a}^t)\\
        \iff&2\sum\limits_{i=1}^{n+1}a_iB_i={\bf a}{\bf b}^t+{\bf b}{\bf a}^t.
    \end{split}
\end{equation*}
Hence, the first claim is proved.

Now, suppose that ${\bf a}$ is an eigenvector of each $A_i$ with eigenvalue $\lambda_i$. So, $A_i{\bf a}=\lambda_i{\bf a}$. Again,
\begin{equation*}
    \begin{split}
        \chi({\bf a}^tX)=X^t(\sum\limits_{i=1}^{n+1}a_iB_i)X=X^t[(A_1{\bf a})\cdots (A_{n+1}{\bf a})]X=X^t[(\lambda_1{\bf a})\cdots (\lambda_{n+1}{\bf a})]X=X^t{\bf a}{\Lambda}^tX
    \end{split}
\end{equation*}
where $\Lambda^t=(\lambda_1,\ldots,\lambda_{n+1})$. Hence, ${\bf a}^tX=0$ produces an invariant great $(n-1)$-sphere with cofactor $\Lambda^tX$.
\end{proof}

\noindent {\bf Acknowledgments.}
The first author is supported by the Prime Minister's Research Fellowship, Government of India. The second author thanks the `ICSR office at IIT Madras' for the SEED research grant, and IMSc Chennai for a month visiting position where a part of the work was completed.

\bibliography{Ref_InvariantAlgebraicSets}
\bibliographystyle{abbrv}

\end{document}